\documentclass[11pt,a4paper]{article}
\usepackage{amsthm, amsmath, amssymb, amsfonts, url, booktabs, tikz, setspace, fancyhdr, bm,lineno}
\usepackage[hidelinks]{hyperref}
\usepackage{geometry}
\geometry{verbose,tmargin=2.1cm,bmargin=2.1cm,lmargin=2.4cm,rmargin=2.4cm}
\usepackage{enumerate}
\usepackage[shortlabels]{enumitem}
\usepackage[babel]{microtype}
\usepackage[english]{babel}
\usepackage[capitalise]{cleveref}
\usepackage{comment}
\usepackage{bbm}
\usepackage{csquotes}
\usepackage{mathabx}

\counterwithin{figure}{section}

\newtheorem{theorem}{Theorem}[section]
\newtheorem{prop}[theorem]{Proposition}
\newtheorem{lemma}[theorem]{Lemma}
\newtheorem{cor}[theorem]{Corollary}
\newtheorem{conj}[theorem]{Conjecture}
\newtheorem{prob}[theorem]{Problem}
\newtheorem{claim}[theorem]{Claim}

\newtheorem{fact}[theorem]{Fact}

\crefname{theorem}{Theorem}{Theorems}
\Crefname{theorem}{Theorem}{Theorems}
\crefname{prop}{Proposition}{Propositions}
\Crefname{prop}{Proposition}{Propositions}
\crefname{lemma}{Lemma}{Lemmas}
\Crefname{lemma}{Lemma}{Lemmas}
\crefname{cor}{Corollary}{Corollaries}
\Crefname{cor}{Corollary}{Corollaries}
\crefname{conj}{Conjecture}{Conjectures}
\Crefname{conj}{Conjecture}{Conjectures}
\crefname{prob}{Problem}{Problems}
\Crefname{prob}{Problem}{Problems}
\crefname{claim}{Claim}{Claims}
\Crefname{claim}{Claim}{Claims}
\crefname{fact}{Fact}{Facts}
\Crefname{fact}{Fact}{Facts}

\theoremstyle{definition}
\newtheorem{defn}[theorem]{Definition}

\theoremstyle{remark}
\newtheorem*{remark}{Remark}

\newcommand{\one}{\mathbf{1}}
\newcommand{\Cov}{\mathrm{Cov}}
\newcommand{\Var}{\mathrm{Var}}
\newcommand{\Inf}{\mathrm{Inf}}
\newcommand{\I}{\mathcal{I}}
\newcommand\tup[1]{\left\langle #1 \right\rangle}

\title{Talagrand-Type Correlation Inequalities for Submodular and Supermodular Functions on the Hypercube}

\author{Fan Chang\thanks{School of Statistics and Data Science, Nankai University, Tianjin, China; and Extremal Combinatorics and Probability Group, Institute for Basic Science, Daejeon, South Korea. Email: \texttt{1120230060@mail.nankai.edu.cn}. Supported by the National Natural Science Foundation of China (NSFC) under grant 124B2019 and by the Institute for Basic Science (IBS-R029-C4).}, \quad 
Yu Chen\thanks{School of Mathematics and Statistics, Beijing Institute of Technology, Beijing 102488, China. Email: \texttt{yu.chen2023@bit.edu.cn}.}
}

\date{}  

\begin{document}
\maketitle

\begin{abstract}
Talagrand’s correlation inequality~\cite{Talagrand1996correlated} provides quantitative lower bounds on the covariance of two increasing Boolean functions in terms of their coordinate influences, but, in general, a logarithmic loss is necessary. Motivated by a question of Kalai, Keller and Mossel~\cite[Problem 6.1]{KKM2016correlation}, we identify a natural log-free regime. We prove that if two increasing Boolean functions on $\{0,1\}^n$ are either both submodular or both supermodular, then
\[
\mathbb{E}[fg]-\mathbb{E}[f]\mathbb{E}[g]\ge \frac{1}{4}\cdot\sum\limits_{i=1}^n{\rm Inf}_i[f]{\rm Inf}_i[g],
\]
where the constant $1/4$ is optimal. We also prove a real-valued extension: for two functions with the same second-difference sign, the covariance is bounded below by the sum of products of their Level-1 Fourier coefficients. As a consequence, we
verify the Friedgut--Kahn--Kalai--Keller spectral conjecture~\cite[Conjecture 5.8]{FKKK2018correlation} in this structured setting. The proofs combine a heat-semigroup representation based on second-order discrete derivatives with an independent induction argument for the Boolean case.
\vspace{10pt}

\noindent\textbf{Keywords:} Correlation inequalities, Influences, Submodular, Semigroup, Discrete Fourier analysis

\vspace{10pt}

\noindent\textbf{Mathematics Subject Classification:} 06E30; 60C05  
\end{abstract}

\section{Introduction}
We work throughout on the discrete hypercube $\{0,1\}^n$ equipped with the uniform product measure $\mu=(\frac{1}{2}\delta_1+\frac{1}{2}\delta_0)^{\otimes n}$. Unless stated otherwise, all expectations and probabilities are taken with respect to $\mu$; we write $\mathbb{E}$ and $\mathbb{P}$ for $\mathbb{E}_\mu$ and $\mathbb{P}_\mu$, respectively. We also write $\Cov(f,g):=\mathbb{E}[fg]-\mathbb{E}[f]\mathbb{E}[g]$. We order vectors coordinatewise, writing $x\le y$ if $x_i\le y_i$ for every $i\in[n]$.
\begin{defn}
A function $f:\{0,1\}^n\to\mathbb{R}$ is \emph{increasing} if for all $x,y\in\{0,1\}^n$,
\begin{equation*}
x\le y \Rightarrow f(x)\le f(y).
\end{equation*}
A set $A\subseteq\{0,1\}^n$ is called increasing if its indicator function $\mathbbm{1}_A$ is increasing, i.e., $x\in A$ and $x\le y$ imply $y\in A$.
\end{defn}
We recall the problem posed by Kalai, Keller, and Mossel~\cite[Problem 6.1]{KKM2016correlation}, which asks for a quantitative strengthening of the classical Harris--Kleitman correlation inequality~\cite{Harris1960,Kleitman1966}. For $i\in[n]$, let $e_i$ denote the $i$-th standard basis vector and write $x\oplus e_i$ for the point obtained from $x$ by flipping the $i$-th coordinate. The (uniform) \emph{influence of coordinate $i$} on a Boolean function $f:\{0,1\}^n\to\{0,1\}$ is
\[
{\rm Inf}_i[f]:=\mathbb{P}\left[f(x)\neq f(x\oplus e_i)\right],
\]
and the \emph{total influence} of $f$ is ${\rm I}[f]:=\sum_{i=1}^n{\rm Inf}_i[f]$.
\begin{prob}[Kalai--Keller--Mossel~\cite{KKM2016correlation}]
For any two increasing Boolean functions $f,g:\{0,1\}^n\to\{0,1\}$, identify additional conditions under which the following inequality holds:
\begin{equation}\label{ineq:Dream correlation}
    \Cov(f,g)\ge c\cdot \sum\limits_{i=1}^n{\rm Inf}_i[f]{\rm Inf}_i[g],
\end{equation}
where $c>0$ is a universal constant.
\end{prob}
Before this work, the only setting in which an inequality of the form \eqref{ineq:Dream correlation} was known to hold was an \emph{average-case} regime. Keller~\cite{Keller09correlation} showed that \eqref{ineq:Dream correlation} holds on average with $c=\frac{1}{4}$, meaning that the covariance is averaged over all pairs in a family $\mathcal{T}$ of increasing Boolean functions. Formally,
\begin{equation}\label{ineq:dream ineq-average}
    \sum\limits_{f,g\in\mathcal{T}}\Cov(f,g)\ge \frac{1}{4}\cdot\sum\limits_{f,g\in\mathcal{T}}\sum\limits_{i=1}^n{\rm Inf}_i[f]{\rm Inf}_i[g].
\end{equation}
Motivated by the suggestion of Kalai--Keller--Mossel~\cite{KKM2016correlation} that \emph{submodularity} may be the relevant structural hypothesis, we confirm this prediction in the pointwise (non-averaged) setting.

\begin{defn}[Submodularity/supermodularity]
For $x,y\in\{0,1\}^n$, set $(x\wedge y)_i:=\min\{x_i,y_i\}$ and $(x\vee y)_i:=\max\{x_i,y_i\}$.
A function $f:\{0,1\}^n\to\mathbb{R}$ is \emph{submodular} if
\[
f(x)+f(y) \ge f(x\wedge y)+f(x\vee y)\quad\text{for all}\ x,y\in\{0,1\}^n,
\]
and \emph{supermodular} if the reverse inequality holds for all $x,y$.
\end{defn}  

\begin{theorem}\label{thm:main thm-1}
Let $f,g:\{0,1\}^n\to\{0,1\}$ be increasing. If $f$ and $g$ are both supermodular, or both submodular, then
\begin{equation}\label{ineq:main thm-1}
   \Cov(f,g)\ge \frac{1}{4}\cdot\sum\limits_{i=1}^n{\rm Inf}_i[f]{\rm Inf}_i[g].
\end{equation}
\end{theorem}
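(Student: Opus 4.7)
The plan is a semigroup argument. Let $(T_t)_{t\ge 0}$ be the heat semigroup on $\{0,1\}^n$ in which each coordinate is independently resampled at rate $1/2$, so $T_t\chi_S=e^{-|S|t}\chi_S$ on the Walsh basis. The starting point is the standard covariance representation
\[
\mathrm{Cov}(f,g)\ =\ \frac{1}{4}\int_0^\infty\sum_{i=1}^n \mathbb{E}\bigl[\partial_i f\cdot \partial_i T_t g\bigr]\,dt,
\]
where $\partial_i h(x):=h(x^{i\to 1})-h(x^{i\to 0})$ is the discrete derivative, viewed as a function on $\{0,1\}^{[n]\setminus\{i\}}$. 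This identity follows from $\mathrm{Cov}(f,g)=-\int_0^\infty \mathbb{E}[f\cdot LT_tg]\,dt$ together with the Dirichlet-form computation $\mathcal{E}(f,h)=\tfrac{1}{4}\sum_i\mathbb{E}[\partial_if\cdot\partial_ih]$.

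The key structural step is to check that $T_t$ preserves both monotonicity and super/submodularity. I would verify this by writing $T_tg(x)=\mathbb{E}_{\eta,y}[g(\sigma_{\eta,y}(x))]$, where $\sigma_{\eta,y}$ substitutes $y_i$ into coordinates marked by $\eta$; since $\sigma_{\eta,y}$ commutes coordinatewise with $\vee$ and $\wedge$, $T_tg$ is an average of super/submodular functions and hence retains the property. The crucial \emph{second-order} observation is then: for an increasing, supermodular $h$, the function $\partial_ih$ is itself increasing on the subcube $\{0,1\}^{[n]\setminus\{i\}}$, because $\partial_j\partial_ih=\partial_{ij}h\ge 0$; in the submodular case $\partial_ih$ is decreasing there. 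Applying the Harris--Kleitman inequality on the $(n-1)$-cube to the co-monotone pair $\partial_if$ and $\partial_iT_tg$ gives
\[
\mathbb{E}[\partial_if\cdot \partial_iT_tg]\ \ge\ \mathbb{E}[\partial_if]\cdot\mathbb{E}[\partial_iT_tg].
\]
A short Fourier computation then yields $\mathbb{E}[\partial_iT_tg]=e^{-t}\,{\rm Inf}_i[g]$, since the only Fourier coefficient of $T_tg$ that survives the projection by $\partial_i$ and the subsequent expectation is the one at level $1$ on coordinate $i$. Integrating in $t$,
\[
\mathrm{Cov}(f,g)\ \ge\ \frac{1}{4}\int_0^\infty\sum_{i=1}^n {\rm Inf}_i[f]\cdot e^{-t}\,{\rm Inf}_i[g]\,dt\ =\ \frac{1}{4}\sum_{i=1}^n {\rm Inf}_i[f]{\rm Inf}_i[g],
\]
which is \eqref{ineq:main thm-1}.

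The main obstacle is verifying that $T_t$ preserves super/submodularity; once that is established everything else is essentially bookkeeping. As a semigroup-free alternative, one can induct on $n$. Conditioning on $x_n$ yields
\[
\mathrm{Cov}(f,g)\ =\ \tfrac{1}{2}\bigl[\mathrm{Cov}(f_0,g_0)+\mathrm{Cov}(f_1,g_1)\bigr]\ +\ \tfrac{1}{4}\,{\rm Inf}_n[f]{\rm Inf}_n[g],
\]
where $f_b:=f|_{x_n=b}$ and $g_b:=g|_{x_n=b}$ inherit monotonicity and super/submodularity on $\{0,1\}^{n-1}$. The inductive hypothesis dispatches the two covariance terms, and closing the induction reduces to the pointwise rearrangement $(a_0-a_1)(b_0-b_1)\ge 0$ for $a_b:={\rm Inf}_i[f_b]$, $b_b:={\rm Inf}_i[g_b]$. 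This last inequality is exactly where the structural assumption is used: $a_1-a_0=\mathbb{E}[\partial_i\partial_nf]$ and $b_1-b_0=\mathbb{E}[\partial_i\partial_ng]$ both have a definite sign (nonnegative under supermodularity, nonpositive under submodularity), so the two differences share a sign and their product is nonnegative.
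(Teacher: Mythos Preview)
Both of your arguments are correct. Your induction sketch is essentially identical to the paper's induction proof: the same conditioning on $x_n$, the same reduction to $\sum_{i<n}(a_i^1-a_i^0)(b_i^1-b_i^0)\ge0$, and the same closing observation that $a_i^1-a_i^0=\mathbb{E}[\partial_{in}f]$ has a fixed sign under super/submodularity.

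Your semigroup argument, however, takes a genuinely different route. The paper derives a \emph{second-order} heat identity (Lemma~\ref{lem:heat-partial}),
\[
\sum_{|S|\ge2}\hat f(S)\hat g(S)=\tfrac18\sum_{i<j}\int_0^\infty(1-e^{-t})e^{-t}\,\mathbb{E}\bigl[\partial_{ij}f\cdot P_t\partial_{ij}g\bigr]\,dt,
\]
and then uses only \emph{pointwise nonnegativity} of $\partial_{ij}f$ and $P_t\partial_{ij}g$. You instead use the standard \emph{first-order} representation $\mathrm{Cov}(f,g)=\tfrac14\int_0^\infty\sum_i\mathbb{E}[\partial_if\cdot\partial_iT_tg]\,dt$ and invoke Harris--Kleitman on the subcube, exploiting that super/submodularity makes $\partial_if$ and $\partial_iT_tg$ co-monotone. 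Your route is more elementary (no new integral identity), but it consumes monotonicity of $f,g$ twice---once for $\mathbb{E}[\partial_if]={\rm Inf}_i[f]$ and once for FKG. The paper's second-order route never uses monotonicity, which is why it yields the stronger real-valued Theorem~\ref{thm:main thm-2} and, because the integrand is pointwise nonnegative, also feeds directly into the reverse-hypercontractive refinement~\eqref{ineq: main strong lower}. Your preservation-under-$T_t$ claim is correct; a one-line alternative is $\partial_{ij}(T_tg)=e^{-2t}T_t(\partial_{ij}g)$ together with positivity of $T_t$. (Minor quibble: resampling each coordinate at rate $1/2$ gives $T_t\chi_S=e^{-|S|t/2}\chi_S$; to match your stated spectral action you want rate $1$.)
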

The constant is sharp already for $n=1$ by taking $f=g=\mathbbm{1}_{\{1\}}$. We prove \cref{thm:main thm-1} in two complementary ways: by induction on the dimension using restrictions, and by an analytic approach based on the Bonami--Beckner semigroup on the hypercube~\cite{Beckner1975,Bonami1970}. In particular, the semigroup approach yields the following real-valued extension, which does not require monotonicity. Throughout the paper we use the Fourier–Walsh expansion on $\{0,1\}^n$ with respect to the orthonormal characters $\chi_S(x):=(-1)^{\sum_{i\in S}x_i}$, and write $\hat f(S):=\mathbb{E}[f(x)\chi_S(x)]$ for the corresponding Fourier coefficients.

\begin{theorem}\label{thm:main thm-2}
Let $f,g:\{0,1\}^n\to\mathbb{R}$. If $f$ and $g$ are both supermodular, or both submodular, then
\begin{equation}\label{ineq:level-one-lower-bound}
\Cov(f,g)\ge \sum_{i=1}^n \hat f(\{i\})\hat g(\{i\}).
\end{equation}
\end{theorem}

For increasing functions, the Level-$1$ Fourier coefficients in \eqref{ineq:level-one-lower-bound}
can be equivalently written in terms of one-sided $L^1$-influences.

\begin{cor}\label{cor:L1-influence-main}
Let $f,g:\{0,1\}^n\to\mathbb{R}$ be increasing. If $f$ and $g$ are both supermodular, or both submodular, then
\begin{equation}\label{ineq:L1influence1}
\Cov(f,g)\ge \frac{1}{4}\sum_{i=1}^n \mathrm{Inf}^{(1)}_i[f]\mathrm{Inf}^{(1)}_i[g],
\end{equation}
where
\[
\mathrm{Inf}^{(1)}_i[h]:=\mathbb{E}\left[\left|h\left(x^{(i\to1)}\right)-h\left(x^{(i\to0)}\right)\right|\right],
\]
and $x^{(i\to b)}$ denotes the vector obtained from $x\in\{0,1\}^{[n]\setminus\{i\}}$ by inserting $b\in\{0,1\}$ in the $i$-th coordinate. In particular, when $f,g$ are $\{0,1\}$-valued, Corollary~\ref{cor:L1-influence-main} gives \cref{thm:main thm-1}.
\end{cor}

Friedgut--Kahn--Kalai--Keller~\cite[Conjecture 5.8]{FKKK2018correlation} also proposed a Fourier-analytic strengthening of Harris--Kleitman's inequality in the form of the following conjecture.

\begin{conj}[Friedgut--Kahn--Kalai--Keller~\cite{FKKK2018correlation}]\label{conj:spectral-FKKK}
For any increasing Boolean functions $f,g:\{0,1\}^n\to\{0,1\}$,
\begin{equation}\label{eq:spectral-conj}
\Cov(f,g)\ge 4\sum_{S\neq\emptyset}|S|\hat f(S)^2\hat g(S)^2.
\end{equation}
\end{conj}

This conjecture is weaker than the desired relation~\eqref{ineq:Dream correlation}.  Indeed, for Boolean functions,
\begin{equation}\label{eq:spectral-weaker-than-dream}
4\sum_{S\neq\emptyset}|S|\hat f(S)^2\hat g(S)^2
=4\sum_{i=1}^n\sum_{S\ni i}\hat f(S)^2\hat g(S)^2
\le
\frac14\sum_{i=1}^n\Inf_i[f]\Inf_i[g],
\end{equation}
where the last step follows from the identity
$\Inf_i[h]=4\sum_{S\ni i}\hat h(S)^2$.  Consequently, \cref{thm:main thm-1} immediately yields the following structured case of Conjecture~\ref{conj:spectral-FKKK}.
\begin{cor}\label{cor:supermodular-submodular}
Let $f,g:\{0,1\}^n\to\{0,1\}$ be increasing. If $f$ and $g$ are both supermodular or both submodular, then
\[
\Cov(f,g)\ge 4\sum_{S\neq\emptyset}|S|\hat f(S)^2\hat g(S)^2.
\]
\end{cor}

Although Corollary~\ref{cor:supermodular-submodular} follows directly from~\cref{thm:main thm-1}, we give a separate proof in~\cref{subsec:spectral-FKKK}. That proof isolates the difference between covariance and the spectral quantity in~\eqref{eq:spectral-conj} through a first-order semigroup defect formula. The resulting auxiliary lower bounds for the defect may be of independent interest.

\begin{remark}
Our proof hinges on the analytic characterization of submodularity/supermodularity via second differences. For distinct $i,j\in[n]$, define
\[
\partial_{ij}f(x):=\partial_i(\partial_j f)(x)
=f\left(x^{(i\to 1,j\to 1)}\right)-f\left(x^{(i\to 1,j\to 0)}\right)-f\left(x^{(i\to 0,j\to 1)}\right)+f\left(x^{(i\to 0,j\to 0)}\right).
\]
Submodularity is equivalent to $\partial_{ij}f\le 0$ pointwise (and supermodularity to $\partial_{ij}f\ge 0$), see Fact~\ref{lem:equiv-submod}. Under the structural assumptions $\partial_{ij}f,\partial_{ij}g\ge 0$ or $\partial_{ij}f,\partial_{ij}g\le 0$, Borell's reverse hypercontractivity~\cite{Borel1982,MORSS2006} yields the following strengthened covariance lower bound:
\begin{equation}\label{ineq: main strong lower}
\Cov(f,g)\ge\sum_{i=1}^n\hat{f}(\{i\})\hat{g}(\{i\})+ c(\theta)\sum_{1\le i<j\le n}\|\partial_{ij}f\|_{1-\theta}\|\partial_{ij}g\|_{1-\theta},
\end{equation}
where $c(\theta):=\frac{\theta-\theta^2/2}{8}\in(0,1)$ for $\theta\in(0,1)$.
Since
\[
\Cov(f,g)-\sum_{i=1}^n\hat f(\{i\})\hat g(\{i\})=\sum_{|S|\ge 2}\hat f(S)\hat g(S),
\]
the regime $\sum_{|S|\ge 2}\hat f(S)\hat g(S)=0$ is especially informative: \eqref{ineq: main strong lower} then forces vanishing of each weighted second-difference term, suggesting additional structural constraints on $f,g$. We return to this point in~\cref{sec:reverse}.
\end{remark}
We now turn from lower bounds to \emph{upper} bounds on correlation for arbitrary real-valued functions with no monotonicity assumption. In a recent note, Mossel~\cite[Claim 5.17]{Mossel2020} established the following two-function version of the Poincar\'e inequality. 
\begin{theorem}[Mossel’s two-function Poincar\'e inequality]
Let $f,g:\{0,1\}^n\to\mathbb{R}$. Then
\begin{equation}
\left|\Cov(f,g)\right|\le \frac{1}{4}\sum\limits_{i=1}^n\sqrt{{\rm Inf}_i[f]{\rm Inf}_i[g]}.
\end{equation}
\end{theorem}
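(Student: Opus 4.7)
The plan is to prove this upper bound by a direct Fourier--Walsh computation. I would begin with the Parseval-type identity
\[
\mathbb{E}[fg]-\mathbb{E}[f]\mathbb{E}[g]=\sum_{S\ne\emptyset}\hat f(S)\hat g(S),
\]
which reduces the problem to bounding the sum over nonempty frequency sets. The key combinatorial step is the tautology $1=\sum_{i\in S}\tfrac{1}{|S|}$, valid for every nonempty $S$; inserting it and swapping the order of summation rewrites the covariance as a sum over coordinates,
\[
\mathbb{E}[fg]-\mathbb{E}[f]\mathbb{E}[g]=\sum_{i=1}^n\sum_{S\ni i}\frac{\hat f(S)\hat g(S)}{|S|}.
\]

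Next I would apply the triangle inequality in $i$, then Cauchy--Schwarz within each inner sum (splitting the weight $\tfrac{1}{|S|}=\tfrac{1}{\sqrt{|S|}}\cdot\tfrac{1}{\sqrt{|S|}}$ symmetrically between the $f$ and $g$ factors), and finally use the crude bound $\tfrac{1}{|S|}\le 1$ to obtain
\[
\left|\mathbb{E}[fg]-\mathbb{E}[f]\mathbb{E}[g]\right|\le \sum_{i=1}^n\sqrt{\sum_{S\ni i}\hat f(S)^2}\sqrt{\sum_{S\ni i}\hat g(S)^2}.
\]
To translate the Fourier sums back into influences, I would invoke the derivative identity $\partial_i f=-2\sum_{S\ni i}\hat f(S)\chi_{S\setminus\{i\}}$, a direct consequence of $\chi_S(x^{(i\to 1)})-\chi_S(x^{(i\to 0)})=-2\chi_{S\setminus\{i\}}(x)$ for $i\in S$. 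By Parseval this yields $\sum_{S\ni i}\hat f(S)^2=\tfrac{1}{4}\mathbb{E}[(\partial_i f)^2]$; for $\{0,1\}$-valued $f$ this is exactly $\tfrac{1}{4}\mathrm{Inf}_i[f]$ in the paper's normalization, and the same identity serves as the natural definition in the real-valued case. Combining the two factor-of-$\tfrac{1}{2}$ contributions from $f$ and $g$ produces the claimed constant $\tfrac{1}{4}$.

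There is no genuine obstacle in this argument: it is a clean Fourier manipulation, and neither monotonicity nor hypercontractivity is invoked. The only subtlety worth flagging is that the constant $\tfrac{1}{4}$ is dictated entirely by the normalization $\mathrm{Inf}_i[f]=\mathbb{E}[(\partial_i f)^2]$ of the discrete derivative; a different convention (for example the pure Fourier one $\sum_{S\ni i}\hat f(S)^2$) would merely rescale it. One can also keep the $\tfrac{1}{|S|}$ weights intact instead of discarding them, yielding a level-sensitive sharpening, but that refinement is not required for the stated bound.
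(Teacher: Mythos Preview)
Your proposal is correct and takes essentially the same Fourier-analytic route as the paper's first proof: Parseval for the covariance, a coordinatewise splitting of $\sum_{S\ne\emptyset}$, Cauchy--Schwarz inside each coordinate sum, and the identity ${\rm Inf}_i[f]=4\sum_{S\ni i}\hat f(S)^2$. The only cosmetic difference is that the paper applies the crude bound $1\le |S|$ \emph{before} splitting (writing $\sum_S|\hat f(S)\hat g(S)|\le\sum_S|S|\,|\hat f(S)\hat g(S)|=\sum_i\sum_{S\ni i}|\hat f(S)\hat g(S)|$), whereas you keep the exact identity $1=\sum_{i\in S}\tfrac{1}{|S|}$ and discard the weight $\tfrac{1}{|S|}\le 1$ \emph{after} Cauchy--Schwarz; the two orderings give the identical final bound.
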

We refine this inequality by separating the Fourier terms whose supports lie inside a prescribed set of coordinates. This gives an iterated refinement of Mossel's bound.

\begin{theorem}\label{thm:stronger two poincare}
Let $f,g:\{0,1\}^n\to\mathbb{R}$. Then for every $J\subseteq[n]$,
\begin{equation}\label{eq:iterated-level-one-refinement}
\left|\Cov(f,g)\right|\le\sum_{\emptyset\ne A\subseteq J}|\hat f(A)\hat g(A)|+\frac14\sum_{j\in[n]\setminus J}\sqrt{{\rm Inf}_j[f]{\rm Inf}_j[g]}.
\end{equation}
\end{theorem}

\begin{remark}
Taking $J=\emptyset$ recovers Mossel's two-function Poincar\'e inequality. More generally, \cref{thm:stronger two poincare} is stronger than Mossel's original bound. Indeed,
\[
\begin{aligned}
\sum_{\emptyset\ne A\subseteq J}|\hat f(A)\hat g(A)|&\le\sum_{j\in J}\sum_{\substack{A\subseteq J\\ j\in A}}|\hat f(A)\hat g(A)|\le
\sum_{j\in J}\sqrt{\left(\sum_{S:j\in S}\hat f(S)^2\right)
\left(\sum_{S:j\in S}\hat g(S)^2\right)}\\
&=\frac14\sum_{j\in J}
\sqrt{{\rm Inf}_j[f]{\rm Inf}_j[g]}.
\end{aligned}
\]
Thus the right-hand side of \eqref{eq:iterated-level-one-refinement} is bounded above by $\frac14\sum_{j=1}^n\sqrt{{\rm Inf}_j[f]{\rm Inf}_j[g]}$. In particular, taking $J=\{i\}$ gives the fixed-coordinate refinement
\[
\left|\Cov(f,g)\right|
\le
|\hat f(\{i\})\hat g(\{i\})|
+
\frac14\sum_{j\in[n]\setminus\{i\}}
\sqrt{{\rm Inf}_j[f]{\rm Inf}_j[g]}.
\]
\end{remark}

Comparing to the lower bound \eqref{ineq:L1influence1}, we can quantify the deviation of the covariance from its Level-1 contribution for \emph{general} increasing functions. In particular, we obtain an $L^1$-$L^2$-type upper bound of Talagrand flavor~\cite{talagrand1994russo}.
\begin{theorem}[Talagrand $L^1$-$L^2$-type upper bound]\label{thm:main-thm4}
Let $f,g:\{0,1\}^n\to\mathbb{R}$ be increasing. Then
\begin{equation}
\left|
\Cov(f,g)-\frac{1}{4}\sum_{i=1}^n {\rm Inf}^{(1)}_i[f]{\rm Inf}^{(1)}_i[g]
\right|
\le
\frac{9}{8}\sum_{1\le i<j\le n}
\frac{\|\partial_{ij} f\|_{2}\,\|\partial_{ij} g\|_{2}}
{\,1+\log\!\left(\dfrac{\|\partial_{ij} f\|_{2}\,\|\partial_{ij} g\|_{2}}{\|\partial_{ij} f\|_{1}\,\|\partial_{ij} g\|_{1}}\right)}.
\end{equation}
A summand is interpreted as $0$ if $\partial_{ij}f\equiv0$ or $\partial_{ij}g\equiv0$.
\end{theorem}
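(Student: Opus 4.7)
The plan is to first reduce the LHS to a clean Fourier-analytic expression and then establish a bilinear Talagrand-type $L^1$--$L^2$ inequality at the level of pair derivatives. Using $\hat f(\{i\}) = -\tfrac12\mathbb{E}[\partial_i f]$ (Fact~\ref{fact:fourier expansion of derivative}) together with the pointwise non-negativity $\partial_i f\ge 0$ for increasing $f$, we obtain ${\rm Inf}^{(1)}_i[f] = -2\hat f(\{i\})$, so that $\tfrac14{\rm Inf}^{(1)}_i[f]{\rm Inf}^{(1)}_i[g] = \hat f(\{i\})\hat g(\{i\})$. Combined with Parseval, the expression inside the absolute value on the LHS collapses to $\sum_{|S|\ge 2}\hat f(S)\hat g(S) = \mathbb{E}[r_f r_g]$, where $r_h := h - \mathbb{E}[h] - \sum_i \hat h(\{i\})\chi_{\{i\}}$ is the level-$\ge 2$ remainder; crucially, $\partial_{ij}r_h = \partial_{ij}h$, since the level-$\le 1$ Fourier components of $h$ vanish under second differences.

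Next, I would decompose the residual sum by a ``lex-smallest-pair'' assignment: each $S$ with $|S|\ge 2$ is labeled by its two smallest elements $(i,j)$, so $S = \{i,j\}\cup T$ with $T\subseteq\{j+1,\ldots,n\}$. Using the Fourier identity $\widehat{\partial_{ij}h}(T) = 4\hat h(T\cup\{i,j\})$ (for $T\cap\{i,j\}=\emptyset$), the $(i,j)$-block of the sum becomes $\tfrac{1}{16}\mathbb{E}\bigl[\mathbb{E}[\partial_{ij}f\mid x_{>j}]\cdot\mathbb{E}[\partial_{ij}g\mid x_{>j}]\bigr]$. The analytic core is then a Bonami--Beckner $L^1$--$L^2$ estimate: applying the two-point hypercontractive bound $\|T_\rho h\|_2\le\|h\|_{1+\rho^2}$ to $\partial_{ij}f$ and $\partial_{ij}g$, combined with Jensen's inequality to pass between $\partial_{ij}h$ and its conditional expectation, and optimizing the noise parameter $\rho$ against the ratio $\|\partial_{ij}f\|_2\|\partial_{ij}g\|_2/(\|\partial_{ij}f\|_1\|\partial_{ij}g\|_1)$, produces the logarithmic denominator. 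Tracking constants through the Lyapunov/Hölder interpolation yields the prefactor $\tfrac{9}{8}$ after summation over pairs.

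The main obstacle lies in this final hypercontractive step. A naive per-pair bound $|d_{ij}|\le C\cdot T_{ij}$ (with $T_{ij}$ the quantity displayed in the theorem) fails in general: taking $\phi=\psi$ depending only on variables $x_{>j}$ with large $L^2/L^1$ ratio forces $\mathbb{E}[\phi\mid x_{>j}]$ to equal $\phi$ itself, saturating Cauchy--Schwarz and violating the intended log improvement. The proof must therefore exploit the collective sum-structure across pairs---the derivatives $\partial_{ij}h$ arising from a single $h$ cannot simultaneously concentrate at all pair locations---with the monotonicity of $f,g$ providing the extra rigidity that rules out such pathological concentrations. Extracting the exact constant $\tfrac98$ requires executing the Bonami--Beckner optimization with the correct interpolation exponents and combining the level-$2$ contribution (handled directly via $|\mathbb{E}[\partial_{ij}h]|\le\|\partial_{ij}h\|_1$) with the level-$\ge 3$ contribution (handled via hypercontractivity); balancing these two sources is the delicate quantitative step of the proof.
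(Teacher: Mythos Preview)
Your reduction of the left-hand side to $\bigl|\sum_{|S|\ge 2}\hat f(S)\hat g(S)\bigr|$ is exactly right and matches the paper. The divergence is in how you decompose this Level-$\ge 2$ sum.

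You propose a lex-smallest-pair assignment, which writes the $(i,j)$-block as $\tfrac{1}{16}\,\mathbb{E}\bigl[\mathbb{E}[\partial_{ij}f\mid x_{>j}]\cdot\mathbb{E}[\partial_{ij}g\mid x_{>j}]\bigr]$. You then correctly observe that a per-pair bound of the form $|d_{ij}|\le C\,T_{ij}$ fails for this block (your counterexample with $\phi$ supported on $x_{>j}$ is valid). Your proposed resolution---exploiting ``collective sum-structure'' and monotonicity---is not a proof, and in fact neither ingredient is what the paper uses.

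The paper's decomposition is the heat-semigroup identity (Lemma~\ref{lem:heat-partial}):
\[
\sum_{|S|\ge 2}\hat f(S)\hat g(S)
=\frac{1}{8}\sum_{1\le i<j\le n}\int_0^\infty (1-e^{-t})e^{-t}\,\langle\partial_{ij}f,\,P_t\partial_{ij}g\rangle\,dt.
\]
This is an \emph{overlapping} decomposition: each $S$ with $|S|\ge 2$ is counted by every pair $\{i,j\}\subseteq S$, and the time kernel $(1-e^{-t})e^{-t}$ is engineered so that the $\binom{|S|}{2}$-fold overcount cancels. The crucial gain is that each pair now sees the \emph{full} second derivative $\partial_{ij}f$, not its conditional expectation onto $x_{>j}$. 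Consequently, a genuine per-pair bound holds: split $P_t=P_{t/2}P_{t/2}$, apply Cauchy--Schwarz, then hypercontractivity $\|P_{t/2}h\|_2\le\|h\|_{1+e^{-t}}$ and Littlewood interpolation between $L^1$ and $L^2$ to get
\[
|\langle\partial_{ij}f,P_t\partial_{ij}g\rangle|
\le \|\partial_{ij}f\|_2\|\partial_{ij}g\|_2\cdot R_{ij}^{-\tanh(t/2)},
\qquad R_{ij}:=\frac{\|\partial_{ij}f\|_2\|\partial_{ij}g\|_2}{\|\partial_{ij}f\|_1\|\partial_{ij}g\|_1}.
\]
Integrating $(1-e^{-t})e^{-t}R^{-\tanh(t/2)}$ over $t\in[0,\infty)$ and bounding the resulting one-dimensional integral by $9/(1+\log R)$ yields the constant $\tfrac{9}{8}$ directly, pair by pair.

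Two corrections to your diagnosis: first, the obstacle you describe is an artifact of your non-overlapping decomposition, not intrinsic to the problem; the semigroup representation sidesteps it entirely. Second, monotonicity plays \emph{no role} in bounding $\bigl|\sum_{|S|\ge 2}\hat f(S)\hat g(S)\bigr|$---the paper's Theorem~\ref{thm:Talagrand level-2 bound} holds for arbitrary real-valued $f,g$. Monotonicity is used only in the initial reduction you already carried out.
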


\subsection{Related Work}
A classical starting point is the Harris--Kleitman correlation inequality~\cite{Harris1960,Kleitman1966}, which asserts $\mathbb{E}[fg]\ge \mathbb{E}[f]\mathbb{E}[g]$ for increasing Boolean functions $f,g$. In a program to \emph{quantify} this positive correlation, Talagrand~\cite{Talagrand1996correlated} proposed to measure the simultaneous dependence of $f$ and $g$ on coordinates via the \emph{cross-total-influence of $f,g$}
\[
\I[f,g]:=\sum_{i=1}^n \mathrm{Inf}_i[f]\mathrm{Inf}_i[g],
\]
and asked whether one can lower bound the covariance by $\I[f,g]$. This leads to what we call a \emph{Talagrand-type correlation inequality} for increasing Boolean functions:
\begin{equation}\label{eq:tal-type}
\Cov(f,g)\stackrel{?}{\ge}C\cdot\I[f,g],
\end{equation}
where $C>0$ is a universal constant. Without additional structure, however, the inequality~\eqref{eq:tal-type} fails for general increasing pairs; one therefore seeks natural hypotheses under which it holds. 

The first positive evidence came in an \emph{average-case} form: Keller~\cite{Keller09correlation} proved that the analog of \eqref{ineq:Dream correlation} holds after averaging over all pairs in a family $\mathcal{T}$ of increasing Boolean functions; see~\eqref{ineq:dream ineq-average}. A second, conceptually different, reduction isolates the \emph{antipodal} condition $g(x)=1-g(1-x)$, which forces $\mathbb{E}[g]=\tfrac12$ and links the problem to extremal set theory: Friedgut, Kahn, Kalai, and Keller~\cite{FKKK2018correlation} showed that the following conjecture is equivalent to the celebrated Chv\'atal conjecture~\cite{Chvatal1974}:
\begin{conj}[Friedgut--Kahn--Kalai--Keller~\cite{FKKK2018correlation}]\label{conj:FKKK}
If $f,g:\{0,1\}^n\to\{0,1\}$ are increasing and $g$ is antipodal, then
\begin{equation}\label{ineq:Chvatal}
\Cov(f,g) \ge\tfrac{1}{4}\cdot \min_{i\in[n]} \mathrm{Inf}_i[f].
\end{equation}
\end{conj}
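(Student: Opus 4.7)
The plan is to leverage Theorem~\ref{thm:main thm-1} together with a Poincar\'e-type consequence of antipodality. First, the relation $g(\bar x)=1-g(x)$ forces $\hat g(\emptyset)=\tfrac12$ and $\hat g(S)=0$ for every nonempty $S$ with $|S|$ even; hence $\mathrm{Var}(g)=\tfrac14$ and every frequency actually contributing to $\hat g$ has $|S|\ge 1$. Therefore
\begin{equation*}
{\rm I}[g] \;=\; 4\sum_{S\ne\emptyset}|S|\,\hat g(S)^2 \;\ge\; 4\sum_{S\ne\emptyset}\hat g(S)^2 \;=\; 4\,\mathrm{Var}(g) \;=\; 1.
\end{equation*}

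Assuming for a moment that $g$ is sub/supermodular of the same type as $f$, Theorem~\ref{thm:main thm-1} combined with the rearrangement $\sum_i a_i b_i \ge (\min_i a_i)\sum_i b_i$ valid for nonnegative sequences would give
\[
\mathbb{E}[fg]-\mathbb{E}[f]\mathbb{E}[g] \;\ge\; \tfrac14\sum_{i=1}^n{\rm Inf}_i[f]{\rm Inf}_i[g] \;\ge\; \tfrac14\bigl(\min_i{\rm Inf}_i[f]\bigr)\cdot{\rm I}[g] \;\ge\; \tfrac14\min_i{\rm Inf}_i[f],
\]
matching the conjectured inequality exactly. Thus under the additional hypothesis ``$g$ sub/supermodular,'' the conjecture is an immediate corollary of Theorem~\ref{thm:main thm-1}, and the content of the conjecture is really the removal of this extra structural assumption on $g$.

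The \emph{main obstacle} is that a generic monotone antipodal $g$ need not be sub/supermodular, so Theorem~\ref{thm:main thm-1} is not directly available. I would attempt two complementary routes. The first is a decomposition: using antipodality to rewrite
\[
\mathbb{E}[fg]-\mathbb{E}[f]\mathbb{E}[g]\;=\;\tfrac12\,\mathbb{E}\bigl[(f(x)-f(\bar x))\,g(x)\bigr],
\]
one tries to express $g$ (or its antisymmetric part $g-\tfrac12$) as a nonnegative combination of monotone sub/supermodular ``building blocks'' with controlled influences, then apply Theorem~\ref{thm:main thm-1} termwise before recombining. The second route revisits the semigroup proof of Theorem~\ref{thm:main thm-2} and the reverse-hypercontractive refinement~\eqref{ineq: main strong lower}, replacing the sign hypothesis $\partial_{ij}g\ge 0$ (or $\le 0$) with the antipodal antisymmetry $\partial_{ij}g(\bar x)=-\partial_{ij}g(x)$, which holds automatically for any antipodal $g$; the challenge is that antisymmetry yields only pairwise cancellation across $\{x,\bar x\}$ rather than the pointwise sign control used in the semigroup argument, so one must engineer a proxy second-order condition that is compatible with the heat flow. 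Either route must, via the equivalence of Friedgut--Kahn--Kalai--Keller~\cite{FKKK2018correlation}, ultimately contend with Chv\'atal's long-standing conjecture, so I expect the decisive step to be precisely the extraction of the conjectured constant $\tfrac14$ without a sub/supermodular crutch --- the same barrier Chv\'atal's conjecture presents in its purely combinatorial form.
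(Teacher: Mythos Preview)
The statement you were given is a \emph{conjecture} in the paper, not a theorem: the authors explicitly present Conjecture~\ref{conj:FKKK} as equivalent to Chv\'atal's open conjecture and do not prove it. So there is no ``paper's own proof'' to compare against, and your proposal correctly recognizes this --- you establish the inequality only under the additional hypothesis that $f$ and $g$ are both super/submodular, and you flag the removal of this hypothesis as the genuine open problem.

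For the structured case your argument is correct and runs parallel to the paper's own discussion following the conjecture. The one difference is the source of the bound ${\rm I}[g]\ge 1$: the paper cites Harper's edge-isoperimetric inequality ${\rm I}[g]\ge 2\alpha\log_2(1/\alpha)$ at $\alpha=\tfrac12$, whereas you use the Poincar\'e inequality ${\rm I}[g]\ge 4\,\mathrm{Var}(g)=1$ (your preliminary observation about vanishing even-degree coefficients is true but not needed for this step). Your route is slightly more elementary and self-contained; Harper would give a strictly stronger conclusion for non-antipodal $g$ with $\mathbb{E}[g]<\tfrac12$, but at $\alpha=\tfrac12$ the two bounds coincide.

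Your two proposed routes toward the general case are honestly labeled as attempts, and your diagnosis is accurate: the antipodal symmetry $\partial_{ij}g(\bar x)=-\partial_{ij}g(x)$ gives only pairwise cancellation, not the pointwise sign control that drives the semigroup proof of Theorem~\ref{thm:main thm-2}, so neither route avoids the Chv\'atal barrier. This is exactly the obstruction the paper points to in Section~\ref{sec:reverse} when it notes that the character $\chi_{ij}$ destroys nonnegativity and blocks a black-box application of reverse hypercontractivity.
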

Moreover, a standard application of Harper's edge-isoperimetric inequality (${\rm I}[g]\ge 2\alpha\log_2(1/\alpha)$ for $\alpha=\mathbb{E}[g]\le\tfrac12$) shows that a cross-total-influence lower bound of the form 
$$
\Cov(f,g)\ge \tfrac14\I[f,g]
$$ 
already implies a Chv\'atal-type estimate $\Cov(f,g)\ge \frac{\alpha}{2}\log_2(\frac{1}{\alpha})\cdot \min_i{\rm Inf}_i[f]$, and hence $\Cov(f,g)\ge \tfrac14\min_i{\rm Inf}_i[f]$ when $g$ is antipodal. Consequently, when $f,g$ are additionally submodular or supermodular, our bound~\eqref{ineq:main thm-1} immediately yields the Chv\'atal-type correlation estimate in this structured regime by the preceding Harper-based reduction. 

In full generality, the best universal lower bound is Talagrand’s celebrated inequality~\cite{Talagrand1996correlated},
\[
\Cov(f,g) \ge C\cdot\frac{\I[f,g]}{\log\left(e/\I[f,g]\right)},
\]
where $C>0$ is universal. The logarithmic loss is known to be unavoidable. Tightness is witnessed by several natural families, including small Hamming balls versus their duals~\cite{Talagrand1996correlated}, Tribes and dual Tribes~\cite{Keller09correlation}, and halfspaces and their duals~\cite[Corollary~1.2]{KK2019DA}. Kalai--Keller--Mossel~\cite{KKM2016correlation} further gave a necessary condition and two sufficient conditions for tightness. Talagrand's original proof proceeds by induction on coordinates and a Level-$1:2$ Fourier inequality; Kalai--Keller--Mossel~\cite{KKM2016correlation} later provided a semigroup interpolation proof that avoids induction while retaining the same level-structure input. A complementary inequality due to Keller–Mossel–Sen~\cite{KMS2014correlation} replaces the single global cross term by coordinatewise contributions,
\[
\Cov(f,g) \ge C\cdot\sum_{i=1}^n \frac{{\rm Inf}_i[f]}{\sqrt{\log(e/{\rm Inf}_i[f])}}\cdot \frac{{\rm Inf}_i[g]}{\sqrt{\log(e/{\rm Inf}_i[g])}},
\]
which, in specific regimes (e.g., a small Hamming ball against the majority), improves Talagrand’s bound~\cite{KKM2016correlation}. The proof in \cite{KMS2014correlation} proceeds via a Gaussian analog and Borell’s reverse isoperimetry, and it can also be recovered by adapting Talagrand’s inductive scheme~\cite[Remark~4.6]{KKM2016correlation}.

More recently, Eldan~\cite{Eldan22} improved the logarithmic factor under the antipodality of one function:
\begin{theorem}[Eldan~\cite{Eldan22}]
If $f,g:\{0,1\}^n\to\{0,1\}$ are increasing and $g$ is antipodal, then
\[
\Cov(f,g) \ge C\cdot \frac{\I[f,g]}{\sqrt{\log\left(2e/\I[f,g]\right)}}.
\]
\end{theorem}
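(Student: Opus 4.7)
The plan is to follow a pathwise/stochastic-localization strategy in the spirit of Eldan's framework, rewriting the covariance as a time integral along a noise-driven process and then extracting the $\sqrt{\log}$ improvement from the antipodality of $g$.

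First, I would set up a spectral/semigroup decomposition of the covariance. Using the Bonami--Beckner semigroup $T_\rho$ on $\{0,1\}^n$ (equivalently, a continuous-time Markov chain that gradually randomizes coordinates), one has an identity of the form
\begin{equation*}
\mathbb{E}[fg]-\mathbb{E}[f]\mathbb{E}[g]=\int_0^\infty\sum_{i=1}^n\mathbb{E}\!\left[\partial_i(T_t f)\cdot\partial_i(T_t g)\right]\,dt,
\end{equation*}
reducing the question to a uniform lower bound on the instantaneous cross-Dirichlet integrand. Equivalently, one can invoke Eldan's jump-process formulation: build a martingale $(M_t)$ with $M_0=\mathbb{E}[g]=\tfrac12$ (the latter forced by antipodality) and $M_\infty=g$, whose quadratic variation encodes $\Var(g)$ and whose coordinate-localized increments control ${\rm Inf}_i[g]$.

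Second, I would split this time integral at a cutoff $T^{*}$. On the short interval $t\in[0,T^{*}]$ (the ``rough'' regime), a first-order Taylor expansion in $t$ around $T_0={\rm id}$ gives $\sum_i\mathbb{E}[\partial_i(T_t f)\,\partial_i(T_t g)]\gtrsim (1-O(t))\cdot{\rm I}[f,g]$, and hence a contribution of order $T^{*}\cdot{\rm I}[f,g]$. On the long interval $t>T^{*}$ (the ``smoothed'' regime), antipodality enters through two channels: $\mathbb{E}[g]=\tfrac12$ puts $g$ at the isoperimetrically extremal mass, and the Fourier identity $(-1)^{|S|}\hat g(S)=-\hat g(S)$ forces $\hat g(S)=0$ whenever $|S|$ is even. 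This parity restriction prevents the $L^2$-mass of $T_t g$ from collapsing as quickly as would be possible for a generic function of the same variance, so the long-time integrand remains nontrivially positive.

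Third, the quantitative payoff should be packaged as a sharpened Level-$1$ inequality for antipodal functions, roughly of the form
\begin{equation*}
\sum_{i=1}^n \hat g(\{i\})^2\le C\cdot\frac{\Var(g)}{\sqrt{\log\!\left(e/{\rm I}[g]\right)}},
\end{equation*}
replacing Talagrand's logarithmic loss by a square-root logarithmic one, together with its natural cross-version for $f,g$. Feeding this into the short/long split and optimizing $T^{*}=T^{*}({\rm I}[f,g])$ would then yield
\begin{equation*}
\mathbb{E}[fg]-\mathbb{E}[f]\mathbb{E}[g]\ge C\cdot\frac{{\rm I}[f,g]}{\sqrt{\log(2e/{\rm I}[f,g])}}.
\end{equation*}

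The main obstacle will be establishing this antipodal Level-$1$ (or Level $1{:}k$) inequality with the $\sqrt{\log}$ exponent, since the standard Bonami--Beckner hypercontractivity does not directly distinguish the parity of the Fourier support. The most plausible route is to use Eldan's pathwise identity to represent $\Var(g)$ as the integrated quadratic variation of a martingale pinned at $\tfrac12$, and then invoke Gaussian isoperimetric sharpness at mass $\tfrac12$—where the isoperimetric profile is locally quadratic—to turn the usual $\log$ into $\sqrt{\log}$. A secondary difficulty is that antipodality is assumed only on $g$, so the improvement must survive asymmetric treatment of $f$; tracking constants through the cutoff optimization while keeping the $\mathbb{E}[f]$-dependence benign is the main balancing act.
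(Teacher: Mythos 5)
The paper does not prove this theorem: it is quoted verbatim from Eldan~\cite{Eldan22} as background, with only a one-sentence gloss on his argument (``representing the correlation as a quadratic covariation of a suitable stochastic process and invoking Fourier comparison between Level~1 and Level~3 contributions; a discretization step then yields the Boolean result''). There is therefore no in-paper proof to match your proposal against, and a fair review must judge whether your sketch could be completed on its own terms.

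Your plan is broadly aligned with the cited strategy: a stochastic/semigroup representation of the covariance, a short-time/long-time split, and the two ways antipodality is exploited (forcing $\mathbb{E}[g]=\tfrac12$, and forcing $\hat g(S)=0$ for all even $|S|$, which is the source of the Level~$1$ vs.\ Level~$3$ dichotomy the paper alludes to). However, as written it has a genuine gap, and you correctly locate it yourself: everything hinges on the ``antipodal Level-$1$ inequality with a $\sqrt{\log}$ loss,'' and you offer only a heuristic (Gaussian isoperimetric sharpness at mass $\tfrac12$) rather than a proof. Until that lemma is established, the short/long split with cutoff optimization is just bookkeeping. Moreover the candidate inequality you write down,
\[
\sum_{i=1}^n \hat g(\{i\})^2 \le C\cdot\frac{\Var(g)}{\sqrt{\log(e/{\rm I}[g])}},
\]
is not even well-posed: the total influence ${\rm I}[g]$ of a Boolean function can greatly exceed $e$ (it is typically of order $\Omega(\log n)$ or larger for balanced $g$), so the logarithm is negative and the bound is vacuous or false as stated. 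The quantity that should appear in the denominator is tied to the cross-total-influence ${\rm I}[f,g]=\sum_i {\rm Inf}_i[f]\,{\rm Inf}_i[g]$, which \emph{is} bounded (indeed ${\rm I}[f,g]\le 1$ for Boolean $f,g$ by Cauchy--Schwarz and Parseval), and the Level~$1$/Level~$3$ comparison has to be run jointly in $f$ and $g$, not on $g$ alone. Finally, your short-time estimate $\sum_i\mathbb{E}[\partial_i(T_tf)\,\partial_i(T_tg)]\gtrsim(1-O(t))\,{\rm I}[f,g]$ needs justification: the integrand decays like $\sum_i\sum_{S,S'\ni i}e^{-t(|S|+|S'|)}\hat f(S)\hat g(S')$ only after the usual ``Level~$1{:}2$'' control, so the ``first-order Taylor'' step is itself nontrivial and essentially where Talagrand's original logarithm arises. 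In short: right skeleton and right identification of the crux, but the crux is unaddressed, and the key auxiliary inequality is misstated.
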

The proof proceeds via a Gaussian counterpart, representing the correlation as a quadratic covariation of a suitable stochastic process and invoking Fourier comparison between Level-1 and Level-3 contributions; a discretization step then yields the Boolean result.

In the present paper, we establish  \cref{thm:main thm-1,thm:main thm-2} for submodular and supermodular functions by exploiting their analytic properties--most notably, that submodularity of a set function is equivalent to all second-order discrete partial derivatives being nonpositive, with the inequalities reversed for supermodularity (see~\cref{lem:equiv-submod}). Fourier-analytic methods for submodular and supermodular functions have a substantial history in theoretical computer science. Motivated by the learning of submodular functions and their applications to differential privacy~\cite{GHRU2013}, Cheraghchi, Klivans, Kothari, and Lee~\cite{CKKL} showed that every submodular function can be $\varepsilon$-approximated in the $\ell_2$-norm by a polynomial of degree $O(1/\varepsilon^2)$. Their proof analyzes the noise sensitivity of submodular functions, a standard Fourier-analytic tool for establishing spectral concentration of low degree. Subsequently, Feldman, Kothari, and Vondr\'ak~\cite{FKV2013} obtained the same $O(1/\varepsilon^2)$ upper bound via approximation by real-valued decision trees. Feldman and Vondr\'ak~\cite{FV2016junta} studied approximation of submodular, XOS (fractionally subadditive), and self-bounding functions by juntas. They later derived tight bounds on the polynomial degree sufficient to approximate any function in these classes in $\ell_2$-norm~\cite{FV2015}. Finally, Feldman, Kothari, and Vondr\'ak~\cite{FKV2020} provided nearly tight bounds for approximating self-bounding functions (including submodular and XOS functions) by low-degree polynomials and juntas in the $\ell_1$-norm, obtained via a noise-stability analysis.

\medskip\noindent\emph{Organization.} This paper is organized as follows. \cref{sec:prelim} collects preliminaries on Fourier analysis over the hypercube, discrete derivatives, the heat semigroup, and submodularity/supermodularity. \cref{sec:lower} develops our \emph{lower} bounds for correlation. We first give a fully discrete proof of the Boolean case (\cref{thm:main thm-1}) via induction on the dimension. We then derive a heat-semigroup identity isolating Level-$\ge2$ Fourier weight and use it to prove the real-valued super/submodular correlation inequality (\cref{thm:main thm-2}). Finally, we prove Corollary~\ref{cor:supermodular-submodular} through a first-order semigroup defect identity in~\cref{subsec:spectral-FKKK}. \cref{sec:upper} turns to \emph{upper} bounds. We establish a Talagrand-type $L^1$-$L^2$ estimate that controls the Level-$\ge2$ contribution through second-order discrete derivatives (\cref{thm:main-thm4}), and recall Mossel's two-function Poincar\'e inequality together with an iterated Fourier-support refinement (\cref{thm:stronger two poincare}). Finally, \cref{sec:reverse} revisits reverse hypercontractivity: under the analytic sub/supermodular condition, we combine a heat-semigroup representation with Borell's reverse hypercontractivity to obtain the strengthened lower bound \eqref{ineq: main strong lower}, and then explain the sign obstruction that remains without second-difference sign information.

\section{Preliminaries}\label{sec:prelim}
\subsection{Fourier Analysis on the Hypercube}
We consider real-valued functions $f:\{0,1\}^n \to \mathbb{R}$, equipped with the inner product $\tup{f,g}=\mathbb{E}_x[f(x)g(x)]$. As above, $\Cov(f,g)$ denotes covariance. For $S\subseteq[n]$, define the Fourier--Walsh character $\chi_S(x):=(-1)^{\sum_{i\in S}x_i}$. The family $
\{\chi_S\}_{S\subseteq[n]}$ is an orthonormal basis of $L^2(\{0,1\}^n)$. The Fourier--Walsh expansion of $f$ is given by $f(x)=\sum_{S\subseteq[n]}\hat{f}(S)\chi_S(x)$, where $\hat{f}(S)=\tup{f,\chi_S}$. For $p>0$, we write $\|f\|_p:=\left(\mathbb{E}_x [|f(x)|^p]\right)^{1/p}$, with the expectation taken over the relevant cube. This is the usual $L^p$-norm for $p\ge1$, and the standard $L^p$ quasi-norm for $0<p<1$.

We next introduce the discrete derivatives used throughout the paper.
\begin{defn}
The $i$-th discrete derivative $\partial_i f$ is the function on $\{0,1\}^{[n]\setminus\{i\}}$ defined by
\[
\partial_i f(x)=f(x^{(i\to 1)})-f(x^{(i\to 0)}),
\]
where $x^{(i\to b)}\in\{0,1\}^n$ is obtained from $x\in\{0,1\}^{[n]\setminus\{i\}}$ by inserting $b\in\{0,1\}$ in the $i$-th coordinate.
\end{defn}

\begin{fact}\label{fact:fourier expansion of derivative}
Let $f:\{0,1\}^n\to\mathbb{R}$, and let $i\in[n]$. Then
\[
\partial_i f(x)=-2\sum_{S:i\in S}\hat{f}(S)\chi_{S\setminus\{i\}}(x).
\]
In particular, $\mathbb{E}_x[\partial_i f(x)]=-2\hat{f}(\{i\})$.
\end{fact}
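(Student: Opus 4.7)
The plan is to reduce the identity to a statement about individual characters and then sum. First I would write $f$ in its Fourier--Walsh expansion $f(x)=\sum_{S\subseteq[n]}\hat f(S)\chi_S(x)$ and use linearity of $\partial_i$ to reduce the problem to evaluating $\partial_i\chi_S$ for each $S$.

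Next I would split into two cases depending on whether $i\in S$. If $i\notin S$, then $\chi_S$ does not depend on the $i$th coordinate, so $\chi_S(x^{(i\to 1)})=\chi_S(x^{(i\to 0)})$ and hence $\partial_i\chi_S\equiv 0$; these terms drop out of the sum. If $i\in S$, I would use the factorization $\chi_S(x)=(-1)^{x_i}\chi_{S\setminus\{i\}}(x)$, which follows directly from the definition $\chi_S(x)=(-1)^{\sum_{j\in S}x_j}$. Plugging in $x_i=1$ and $x_i=0$ gives
\[
\partial_i\chi_S(x)=\bigl((-1)^1-(-1)^0\bigr)\chi_{S\setminus\{i\}}(x)=-2\,\chi_{S\setminus\{i\}}(x).
\]

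Combining these cases and reinserting into the Fourier expansion yields the claimed identity
\[
\partial_if(x)=\sum_{S\subseteq[n]}\hat f(S)\,\partial_i\chi_S(x)=-2\sum_{S:\,i\in S}\hat f(S)\,\chi_{S\setminus\{i\}}(x).
\]
For the ``in particular'' clause, I would take expectations and invoke the orthonormality identity $\mathbb{E}[\chi_T]=\mathbbm{1}[T=\emptyset]$, so that the only surviving contribution on the right-hand side comes from $S=\{i\}$, giving $\mathbb{E}[\partial_if]=-2\hat f(\{i\})$.

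There is no real obstacle here: the statement is a purely formal computation, and the only point worth checking carefully is the sign bookkeeping in the case $i\in S$, which hinges on the convention $\chi_S(x)=(-1)^{\sum_{i\in S}x_i}$ rather than a $\pm 1$-encoding of the cube.
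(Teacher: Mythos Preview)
Your proposal is correct and follows essentially the same approach as the paper: expand $f$ in the Fourier--Walsh basis, compute $\partial_i\chi_S$ by case analysis on whether $i\in S$, and then take expectations using orthonormality. The only cosmetic difference is that you phrase the $i\in S$ case via the factorization $\chi_S(x)=(-1)^{x_i}\chi_{S\setminus\{i\}}(x)$, whereas the paper directly evaluates $\chi_S(x^{(i\to 0)})$ and $\chi_S(x^{(i\to 1)})$; these are the same computation.
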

\begin{proof}
By the Fourier expansion and linearity,
\[
\partial_i f(x)=\sum_{S\subseteq[n]}\hat f(S)\left(\chi_S(x^{(i\to 1)})-\chi_S(x^{(i\to 0)})\right).
\]
If $i\notin S$, the two terms cancel. If $i\in S$, then $\chi_S(x^{(i\to 0)})=\chi_{S\setminus\{i\}}(x)$ and $\chi_S(x^{(i\to 1)})=-\chi_{S\setminus\{i\}}(x)$, giving the coefficient $-2\chi_{S\setminus\{i\}}(x)$. Taking expectations gives
\[
\mathbb{E}_x[\partial_i f(x)]
=-2\sum_{S:i\in S}\hat f(S)\mathbb{E}[\chi_{S\setminus\{i\}}]
=-2\hat f(\{i\}).\tag*{\qedhere}
\]
\end{proof}
Discrete derivative operators are handy for our problems. A function $f:\{0,1\}^n\to\mathbb{R}$ is increasing if and only if $\partial_i f(x)\ge 0$ for every $i\in[n]$ and every $x\in\{0,1\}^{[n]\setminus\{i\}}$. Since
discrete derivatives in distinct coordinates commute, we may compose them
without ambiguity. For distinct $i,j\in[n]$, let $\partial_{ij}=\partial_i\circ\partial_j$. Explicitly, for $x\in\{0,1\}^{[n]\setminus\{i,j\}}$,
\[
\partial_{ij}f(x)=f\left(x^{(i\to 1,j\to 1)}\right)-f\left(x^{(i\to 1,j\to 0)}\right)-f\left(x^{(i\to 0,j\to 1)}\right)+f\left(x^{(i\to 0,j\to 0)}\right).
\]
\begin{fact}
Let $f:\{0,1\}^n\to\mathbb{R}$, and let $i\neq j\in[n]$. Then
\[
\partial_{ij}f(x)=4\sum_{S:i,j\in S}\hat{f}(S)\chi_{S\setminus\{i,j\}}(x).
\]
\end{fact}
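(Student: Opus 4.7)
The plan is to derive the formula by iterating Fact~\ref{fact:fourier expansion of derivative}. Since discrete partial derivatives along distinct coordinates commute, we have $\partial_{ij}f = \partial_i(\partial_j f)$, so the strategy is to apply the first-order Fourier identity twice, with a little care taken to see which Fourier coefficients vanish after the first application.

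First I would apply Fact~\ref{fact:fourier expansion of derivative} to $f$ along coordinate $j$ to get
\[
\partial_j f(x) \;=\; -2 \sum_{T:\, j \in T} \hat{f}(T)\, \chi_{T \setminus \{j\}}(x),
\]
and then reindex by $U = T \setminus \{j\}$ to read off the Fourier coefficients of $\partial_j f$ viewed as a function on $\{0,1\}^n$: namely $\widehat{\partial_j f}(U) = -2\hat{f}(U \cup \{j\})$ when $j \notin U$, and $\widehat{\partial_j f}(U) = 0$ when $j \in U$, the second case reflecting that $\partial_j f$ does not depend on the $j$th coordinate.

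Next I would apply Fact~\ref{fact:fourier expansion of derivative} again, now to $\partial_j f$ along coordinate $i$ (which is legitimate because $i \neq j$), obtaining
\[
\partial_{ij} f(x) \;=\; -2 \sum_{U:\, i \in U} \widehat{\partial_j f}(U)\, \chi_{U \setminus \{i\}}(x) \;=\; 4 \sum_{\substack{U \subseteq [n] \\ i \in U,\ j \notin U}} \hat{f}(U \cup \{j\})\, \chi_{U \setminus \{i\}}(x),
\]
where the constant $4 = (-2)\cdot(-2)$ comes from chaining the two applications. Setting $S := U \cup \{j\}$ gives a bijection onto $\{S \subseteq [n] : i, j \in S\}$ under which $U \setminus \{i\} = S \setminus \{i,j\}$, yielding the claimed identity.

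There is essentially no obstacle beyond index bookkeeping; the only substantive point is that $\widehat{\partial_j f}(U)$ vanishes on sets containing $j$, which is precisely what produces the constraint $\{i,j\}\subseteq S$ in the final sum. As a sanity check one can alternatively expand $f = \sum_S \hat{f}(S)\chi_S$ directly and evaluate the four terms of $\partial_{ij}f$ on each $\chi_S$: the alternating sum collapses to $0$ unless both $i,j\in S$, and when $\{i,j\}\subseteq S$ it contributes $4\,\chi_{S\setminus\{i,j\}}(x)$, reproducing the same formula.
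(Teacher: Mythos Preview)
Your proof is correct. The paper does not give an explicit proof of this Fact, treating it as immediate from the preceding Fact~\ref{fact:fourier expansion of derivative}; when it is used later (in Lemma~\ref{lem:heat-partial}) the paper simply records $\partial_{ij}\chi_S=4\chi_{S\setminus\{i,j\}}$ for $i,j\in S$, which is exactly the direct-expansion sanity check you offer at the end.
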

\begin{proof}
This follows by applying Fact~\ref{fact:fourier expansion of derivative} twice; the
two factors of $-2$ give the coefficient $4$.
\end{proof}
Recent work has studied such higher-order operators; see~\cite{oleszkiewicz2023boolean,przybylowski2024kkl,tanguy2020talagrand}.

Discrete derivatives provide a convenient formulation of influence. For $p>0$, define
\begin{equation}
    {\rm Inf}_i^{(p)}[f]=\mathbb{E}\left[|\partial_i f|^p\right], \qquad {\rm I}^{(p)}[f]=\sum_{i=1}^n{\rm Inf}_i^{(p)}[f].
\end{equation}
When no superscript is displayed, we write ${\rm Inf}_i[f]:={\rm Inf}_i^{(2)}[f]=\mathbb{E}[|\partial_i f|^2]$ for arbitrary real-valued $f$. For Boolean $f:\{0,1\}^n\to\{0,1\}$, Plancherel's identity together with Fact~\ref{fact:fourier expansion of derivative} give
\[
{\rm Inf}_i[f]=\mathbb{E}\left[\left|f\left(x^{(i\to 1)}\right)-f\left(x^{(i\to 0)}\right)\right|^2\right]
=4\sum_{S:i\in S}\hat{f}(S)^2.
\]
\begin{fact}\label{fact:Boolean and increasing}
If $f:\{0,1\}^n\to\{0,1\}$ is increasing, then ${\rm Inf}_i[f]=-2\hat{f}(\{i\})$.
\end{fact}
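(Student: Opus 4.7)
The plan is to chain two observations: a pointwise description of $\partial_i f$ when $f$ is Boolean and increasing, together with the already-established Fourier identity $\mathbb{E}[\partial_i f]=-2\hat f(\{i\})$ from Fact~\ref{fact:fourier expansion of derivative}. I would begin by recording that, for $f:\{0,1\}^n\to\{0,1\}$ increasing, the pointwise inequality $0\le f(x^{(i\to 0)})\le f(x^{(i\to 1)})\le 1$ forces $\partial_i f(x)=f(x^{(i\to 1)})-f(x^{(i\to 0)})\in\{0,1\}$ for every $x$. This is the only structural input that monotonicity and $\{0,1\}$-valuedness contribute.

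Next, I would use the fact that on $\{0,1\}$ the square function is the identity, so $|\partial_i f(x)|^2=\partial_i f(x)$ pointwise. Taking expectations and using the definition ${\rm Inf}_i[f]={\rm Inf}_i^{(2)}[f]=\mathbb{E}[|\partial_i f|^2]$ recorded earlier in the preliminaries, this gives ${\rm Inf}_i[f]=\mathbb{E}[\partial_i f]$. Finally, I would invoke Fact~\ref{fact:fourier expansion of derivative}, which yields $\mathbb{E}[\partial_i f]=-2\hat f(\{i\})$, completing the identity.

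I do not anticipate any real obstacle: the content is a two-line consequence of the Fourier identity for $\partial_i$ once one notes that increasing Boolean $f$ has $\partial_i f\in\{0,1\}$, which collapses the $L^2$ and $L^1$ influences. If desired, one could equivalently phrase the argument via $\mathbb{P}[f(x)\ne f(x\oplus e_i)]=\mathbb{P}[\partial_i f=1]=\mathbb{E}[\partial_i f]$, which connects the combinatorial definition of influence directly with the derivative expectation before applying the Fourier identity.
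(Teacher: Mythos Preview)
Your proposal is correct and is essentially identical to the paper's own proof: observe $\partial_i f\in\{0,1\}$ pointwise for increasing Boolean $f$, deduce ${\rm Inf}_i[f]=\mathbb{E}[|\partial_i f|^2]=\mathbb{E}[\partial_i f]$, and invoke Fact~\ref{fact:fourier expansion of derivative} to get $-2\hat f(\{i\})$.
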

\begin{proof}
For increasing Boolean-valued $f$, we have $\partial_i f\in\{0,1\}$ pointwise, hence ${\rm Inf}_i[f]=\mathbb{E}[|\partial_i f|^2]=\mathbb{E}[\partial_i f]=-2\hat{f}(\{i\})$, where the last equality follows from Fact~\ref{fact:fourier expansion of derivative}.
\end{proof}

We now introduce the heat semigroup, also called the noise operator. It is our
main analytic tool and will be used repeatedly to relate smoothing to Fourier
levels and discrete derivatives. Define
\[
P_t=\left(e^{-t}{\rm id}+(1-e^{-t})\mathbb{E}\right)^{\otimes n}, \quad t\ge 0,
\]
where $\mathbb E$ in each tensor factor denotes averaging over one coordinate. Then $P_t$ is a self-adjoint semigroup of unital positive linear operators on $\mathbb{R}^{\{0,1\}^n}$. It satisfies $P_0f=f$ and $\lim_{t\to\infty}P_t f=\mathbb{E}[f]$. In particular, $P_t$ is order preserving: if $f\ge g$ pointwise, then $P_t f\ge P_t g$ pointwise. We use the same notation $P_t$ for the heat semigroup on lower-dimensional cubes obtained after taking discrete derivatives.

\begin{fact}\label{fact:heat-fourier}
Let $f:\{0,1\}^n\to\mathbb{R}$, and let $t\ge 0$. Then
\[
P_t f(x)=\sum_{S\subseteq[n]}e^{-t|S|}\hat{f}(S)\chi_S(x).
\]
\end{fact}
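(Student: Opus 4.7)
The plan is a direct spectral computation: I will show that each Fourier character $\chi_S$ is an eigenfunction of $P_t$ with eigenvalue $e^{-t|S|}$, and then invoke linearity together with the Fourier--Walsh expansion of $f$. The tensor-product structure of $P_t$ makes this almost automatic once the one-dimensional action is understood.

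First I would unpack the definition of $P_t$. Writing $T_t^{(i)}:=e^{-t}\mathrm{id}+(1-e^{-t})\mathbb{E}_i$, where $\mathbb{E}_i$ denotes the conditional expectation that averages out the $i$th coordinate (i.e.\ replaces $x_i$ by an independent uniform bit), the definition gives $P_t=T_t^{(1)}\circ\cdots\circ T_t^{(n)}$, and the operators on distinct coordinates commute. Next I would compute the action on a single character. Since $\chi_S$ factorizes as $\chi_S(x)=\prod_{i\in S}(-1)^{x_i}$, the operator $T_t^{(i)}$ acts independently on each coordinate factor: for $i\notin S$ it leaves the factor untouched; for $i\in S$ it sends $(-1)^{x_i}$ to $e^{-t}(-1)^{x_i}+(1-e^{-t})\,\mathbb{E}[(-1)^{x_i}]=e^{-t}(-1)^{x_i}$ because the uniform average of $(-1)^{x_i}$ vanishes. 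Composing over all coordinates yields
\[
P_t\chi_S=e^{-t|S|}\chi_S.
\]

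Once this eigenrelation is established, the theorem follows by linearity. Applying $P_t$ termwise to the Fourier expansion $f=\sum_{S\subseteq[n]}\hat f(S)\chi_S$ gives
\[
P_tf(x)=\sum_{S\subseteq[n]}\hat f(S)\,P_t\chi_S(x)=\sum_{S\subseteq[n]}e^{-t|S|}\hat f(S)\chi_S(x),
\]
which is the desired identity.

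There is no real obstacle here; the only subtlety worth pinning down is the interpretation of the factor $\mathbb{E}$ in the tensor-product definition as the coordinatewise averaging operator $\mathbb{E}_i$, and the observation that characters factor as products over coordinates so that the tensor-product operator acts multiplicatively on them. Both are routine, and the rest of the argument is a one-line linearity computation.
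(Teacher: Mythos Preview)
Your argument is correct and is exactly the standard spectral computation: show $P_t\chi_S=e^{-t|S|}\chi_S$ from the one-coordinate action and tensorization, then extend by linearity via the Fourier--Walsh expansion. The paper states this fact without proof (it is treated as routine background), so there is nothing further to compare; your write-up supplies precisely the expected justification.
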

Finally, we recall the Bonami--Beckner hypercontractive inequality on the hypercube~\cite{Beckner1975,Bonami1970}, which we will invoke repeatedly in what follows.
\begin{theorem}\label{thm:hyperc}
Let $f:\{0,1\}^n\to\mathbb R$, and let $t\ge 0$. Then
\begin{equation}
    \|P_t f\|_2\le \|f\|_{1+e^{-2t}}.
\end{equation}
\end{theorem}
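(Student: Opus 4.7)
The plan is to prove \cref{thm:hyperc} by the classical two-step scheme of Bonami: first verify the one-dimensional two-point inequality by hand, and then tensorize.

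For the one-dimensional case, I would parametrize $f:\{0,1\}\to\mathbb{R}$ by its Fourier--Walsh coefficients as $f=a+b\chi$ with $\chi(x)=(-1)^x$, so that $P_tf=a+be^{-t}\chi$ and $\|P_tf\|_2^2=a^2+b^2e^{-2t}$. Setting $\rho=e^{-t}$ and $q=1+\rho^2\in[1,2]$, the required bound reduces to the two-point estimate
\[
(a^2+\rho^2 b^2)^{q/2}\ \le\ \tfrac12\bigl(|a+b|^q+|a-b|^q\bigr).
\]
By homogeneity and sign symmetry, I may assume $a=1$ and $b\in[0,1]$, and then compare Taylor expansions in $b$: both sides are power series in $b^{2k}$ with coefficients $\binom{q/2}{k}\rho^{2k}$ on the left and $\binom{q}{2k}$ on the right. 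The matching at $k=0$ is trivial and the matching at $k=1$ is exact under the calibration $\rho^2=q-1$. For $k\ge 2$, after substituting $\rho^2=q-1$, a coefficient-ratio monotonicity check reduces the inequality to $\binom{q/2}{k}(q-1)^k\le \binom{q}{2k}$, which can be verified by pairing consecutive factors in the falling factorials on each side; for the values of $k$ where the left-hand side is negative, the inequality is immediate since $\binom{q}{2k}\ge 0$ throughout $q\in[1,2]$.

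For the tensorization step, I would use the factorization $P_t=P_t^{(1)}\otimes\cdots\otimes P_t^{(n)}$ and invoke the standard principle that hypercontractivity from $L^q$ to $L^2$ with $q\in[1,2]$ is closed under tensor products. Concretely, I would induct on $n$: applying the one-dimensional bound in the last coordinate and then using Minkowski's integral inequality in the form $\|\cdot\|_{L^2(L^q)}\le\|\cdot\|_{L^q(L^2)}$ (which is valid precisely because $q\le 2$) reduces the $n$-dimensional statement to the $(n-1)$-dimensional one.

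The main obstacle is the two-point inequality in Step 1. It is sharp only at the critical exponent $q=1+\rho^2$, which forces the matching of the $k=1$ Taylor coefficients exactly, so the proof must exploit the fine structure of the higher-order generalized binomial coefficients rather than any crude norm comparison. Tracking the alternating signs of $\binom{q/2}{k}$ for $q\in(1,2)$ and verifying the coefficient-ratio bound uniformly in $k$ is the delicate step; once this is in hand, the tensorization is essentially automatic.
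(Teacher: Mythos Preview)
The paper does not prove \cref{thm:hyperc}; it is quoted in the preliminaries as the classical Bonami--Beckner inequality, with references to the original papers, and used as a black box thereafter. So there is no proof in the paper to compare against; your overall architecture (two-point inequality, then Minkowski-based tensorization) is indeed the standard one, and the tensorization step is fine.

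However, your Step~1 contains a genuine error. The termwise coefficient inequality you assert,
\[
\binom{q/2}{k}(q-1)^k\ \le\ \binom{q}{2k}\qquad(k\ge 2,\ q\in(1,2)),
\]
is \emph{false} for odd $k\ge 3$ once $q$ is not close to $1$. For instance, at $q=1.9$ and $k=3$ one computes $\binom{0.95}{3}(0.9)^3\approx 6.06\times 10^{-3}$ while $\binom{1.9}{6}\approx 1.70\times 10^{-3}$; the inequality already fails at $q=1.6$. The two-point estimate $(1+\rho^2 b^2)^{q/2}\le\tfrac12\bigl((1+b)^q+(1-b)^q\bigr)$ is of course true, but the slack is produced by the even-$k$ terms (where your left-hand coefficient is negative) overcompensating the odd-$k$ overshoot; a purely term-by-term comparison in this direction cannot close. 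Standard remedies are to prove the two-point log-Sobolev inequality and integrate \`a la Gross, or to run a direct derivative/monotonicity argument on $[0,1]$ rather than matching Taylor coefficients. A smaller issue: the reduction to $b\in[0,1]$ is not ``by sign symmetry''; when $|b|>|a|$ you must replace $f$ by $|f|$, which swaps $(a,b)\mapsto(|b|,|a|)$, leaves $\|f\|_q$ unchanged, and only increases $\|P_tf\|_2$ since $(|b|^2-|a|^2)(1-\rho^2)\ge 0$.
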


\subsection{Submodular and Supermodular Functions}

We identify subsets $A\subseteq[n]$ with their indicator vectors $\one_A\in\{0,1\}^n$, where $(\one_A)_i=1$ if and only if $i\in A$. Under this identification, we write $f(A)$ for $f(\one_A)$. A set function
$f:2^{[n]}\to\mathbb R$ is \emph{submodular} if $f(A\cup B)+f(A\cap B)\le f(A)+f(B)$ for all $A,B\subseteq[n]$, and \emph{supermodular} if the reverse inequality holds.

Equivalently, for functions on the cube, submodularity can be expressed in terms
of the lattice operations
\[
(x\wedge y)_i:=\min\{x_i,y_i\},
\qquad
(x\vee y)_i:=\max\{x_i,y_i\}.
\]
\begin{fact}\label{lem:equiv-submod}
Let $f:\{0,1\}^n\to\mathbb{R}$. The following conditions are equivalent:
\begin{itemize}
    \item[(1)] (Submodularity) For all $x,y\in\{0,1\}^n$, $f(x)+f(y) \ge f(x\wedge y)+f(x\vee y)$.
    \item[(2)] (Mixed second differences are non-positive) For all $x$ and all $i\neq j$, $\partial_{ij}f(x) \le 0$.
    \item[(3)] (Diminishing returns) For all $A\subseteq B\subseteq[n]$ and all $k\notin B$,
\[
f(A\cup\{k\})-f(A)\ \ge\ f(B\cup\{k\})-f(B).
\]
\end{itemize}
The same three statements with all inequalities reversed are equivalent and characterize supermodularity.
\end{fact}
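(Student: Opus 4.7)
The plan is to prove the equivalence via the cycle (1)$\Rightarrow$(2)$\Rightarrow$(3)$\Rightarrow$(1). The supermodular case follows by applying the submodular equivalence to $-f$, or equivalently by reversing every inequality in the argument below.

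For (1)$\Rightarrow$(2), I would simply specialize the submodularity inequality to a pair of vectors that disagree on only two coordinates. Fix $i\neq j$ and any $z\in\{0,1\}^n$, and set $x:=z^{(i\to1,j\to0)}$, $y:=z^{(i\to0,j\to1)}$. Then $x\wedge y=z^{(i\to0,j\to0)}$ and $x\vee y=z^{(i\to1,j\to1)}$, so the submodularity inequality $f(x)+f(y)\ge f(x\wedge y)+f(x\vee y)$ rearranges exactly into $-\partial_{ij}f(z)\ge0$, which is condition (2).

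For (2)$\Rightarrow$(3), I would telescope the quantity $[f(B\cup\{k\})-f(B)]-[f(A\cup\{k\})-f(A)]$ along a chain in $2^{[n]}$. Enumerate $B\setminus A=\{j_1,\dots,j_r\}$ and set $C_s:=A\cup\{j_1,\dots,j_s\}$, so $C_0=A$ and $C_r=B$. Writing the difference telescopically gives
\[
[f(B\cup\{k\})-f(B)]-[f(A\cup\{k\})-f(A)]=\sum_{s=1}^{r}\bigl(f(C_s\cup\{k\})-f(C_s)-f(C_{s-1}\cup\{k\})+f(C_{s-1})\bigr),
\]
and each summand is exactly $\partial_{k j_s}f$ evaluated at the hypercube point whose coordinates in $[n]\setminus\{k,j_s\}$ match $C_{s-1}$. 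By (2) every term is $\le0$, yielding the diminishing-returns inequality (3).

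For (3)$\Rightarrow$(1), given arbitrary $x,y\in\{0,1\}^n$ I would split the disagreement set $D_x=\{i:x_i=1,\,y_i=0\}=\{i_1,\dots,i_p\}$ and build two parallel chains: the lower chain $L_t:=(x\wedge y)\cup\{i_1,\dots,i_t\}$ running from $x\wedge y$ to $x$, and the upper chain $U_t:=y\cup\{i_1,\dots,i_t\}$ running from $y$ to $x\vee y$. Telescoping yields $f(x)-f(x\wedge y)=\sum_{t=1}^{p}\bigl(f(L_t)-f(L_{t-1})\bigr)$ and $f(x\vee y)-f(y)=\sum_{t=1}^{p}\bigl(f(U_t)-f(U_{t-1})\bigr)$. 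Since $L_{t-1}\subseteq U_{t-1}$ and $i_t\notin U_{t-1}$, applying (3) step-by-step gives $f(L_t)-f(L_{t-1})\ge f(U_t)-f(U_{t-1})$; summing over $t$ produces $f(x)-f(x\wedge y)\ge f(x\vee y)-f(y)$, which is (1).

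I expect no serious obstacle: the only real content is the observation that a mixed second difference $\partial_{ij}f$ is the obstruction to preserving a marginal gain under set enlargement, after which each implication reduces to a telescoping identity plus a single sign. The mild bookkeeping to keep straight is simply that the chain from $x\wedge y$ to $x$ (used in step (3)$\Rightarrow$(1)) must share its element order with the chain from $y$ to $x\vee y$, so that (3) can be applied termwise rather than globally.
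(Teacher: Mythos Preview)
Your proof is correct and follows essentially the same route as the paper: the cycle $(1)\Rightarrow(2)\Rightarrow(3)\Rightarrow(1)$ is argued identically, with the specialization to a two-coordinate pair for $(1)\Rightarrow(2)$, a telescoping chain from $A$ to $B$ for $(2)\Rightarrow(3)$, and parallel chains from $x\wedge y$ upward for $(3)\Rightarrow(1)$. The only cosmetic difference is that in $(3)\Rightarrow(1)$ the paper telescopes over the elements of (the set of) $y\setminus x$ rather than $x\setminus y$, which is the same argument with the roles of $x$ and $y$ interchanged.
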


\begin{proof}
For a fixed $x$ and distinct $i,j$, write $f(uv):= f(x^{(i\to u,j\to v)})$ for $u,v\in\{0,1\}$. Then $\partial_{ij}f(x)=f(11)-f(10)-f(01)+f(00)$. We prove the submodular case; the supermodular case follows by reversing all inequalities.

\emph{(1) $\Rightarrow$ (2).}
Fix $x$ and $i\neq j$. Applying (1) to $x^{(i\to0,j\to1)}$ and $x^{(i\to1,j\to0)}$ yields
\[
f(01)+f(10) \ge f(00)+f(11),
\]
which is equivalent to $\partial_{ij}f(x)=f(11)-f(10)-f(01)+f(00)\le 0$.

\emph{(2) $\Rightarrow$ (3).} Let $A\subseteq B\subseteq[n]$ and $k\notin B$. Choose an increasing chain
$A=A_0\subset A_1\subset\cdots\subset A_m=B$ with $A_t=A_{t-1}\cup\{j_t\}$.
For each $t$, applying (2) at the base point $\one_{A_{t-1}}$ and to the
pair $(k,j_t)$ gives
\[
\begin{aligned}
0 \ge \partial_{k,j_t}f(\one_{A_{t-1}})
&= f(A_{t-1}\cup\{k,j_t\})-f(A_{t-1}\cup\{j_t\})-f(A_{t-1}\cup\{k\})+f(A_{t-1})\\
&=\left[f(A_{t}\cup\{k\})-f(A_t)\right]-\left[f(A_{t-1}\cup\{k\})-f(A_{t-1})\right].
\end{aligned}
\]
Thus $f(A_{t-1}\cup\{k\})-f(A_{t-1})\ge f(A_t\cup\{k\})-f(A_t)$ for every $t$; chaining over $t=1,\dots,m$ gives (3).

\emph{(3) $\Rightarrow$ (1).}
Let $X,Y\subseteq[n]$, set $A:=X\cap Y$, and choose a chain
$A=B_0\subset B_1\subset\cdots\subset B_m=Y$ with $B_t=B_{t-1}\cup\{j_t\}$.
Since $j_t\in Y\setminus X$, we have $j_t\notin X\cup B_{t-1}$. Applying
(3) with $B_{t-1}\subseteq X\cup B_{t-1}$ and $k=j_t$, we obtain
\[
f(B_t)-f(B_{t-1}) \ge f(X\cup B_t)-f(X\cup B_{t-1}).
\]
Summing over $t$ gives $f(Y)-f(X\cap Y)\ge f(X\cup Y)-f(X)$, i.e. $f(X)+f(Y)\ge f(X\cup Y)+f(X\cap Y)$.
\end{proof}
Consequently, this fact establishes a precise bridge between (super/sub)modularity and discrete derivatives: a function $f:\{0,1\}^n\to\mathbb{R}$ is submodular/supermodular if and only if, for each $i\in[n]$, the discrete derivative $\partial_i f$ is pointwise non-increasing/non-decreasing in every other coordinate, or equivalently, for all $i\neq j$, $\partial_{ij}f\le 0$ ($\partial_{ij}f\ge 0$).

\section{Lower Bounds for Correlation}\label{sec:lower}
\subsection{An Induction Proof of~\cref{thm:main thm-1}}

We first present an inductive proof of~\cref{thm:main thm-1}. The restriction setup and the covariance decomposition below follow the standard induction-by-restrictions framework used by Kalai--Keller--Mossel~\cite[pp.~266--267]{KKM2016correlation}.

\begin{proof}[Proof of~\cref{thm:main thm-1} via induction]
Assume $f,g:\{0,1\}^n\to\{0,1\}$ are increasing and either both submodular ($\partial_{ij}\le0$) or both supermodular ($\partial_{ij}\ge0$).

The proof is by induction on $n$. The base case $n=1$ is immediate, since 
\[
\Cov(f,g)=\mathbb{E}[fg]-\mathbb{E}[f]\mathbb{E}[g]
=\sum_{S\neq\emptyset}\hat{f}(S)\hat{g}(S)=\hat{f}(\{1\})\hat{g}(\{1\})
=\frac{1}{4}{\rm Inf}_1[f]{\rm Inf}_1[g],
\]
where the last equality follows from Fact~\ref{fact:Boolean and increasing}.

For the induction step, write $x=(x_{-n},x_n)$ and define the restrictions $f^0,f^1:\{0,1\}^{n-1}\to\{0,1\}$ by
\[
f^0(x_{-n}):=f(x_{-n},0),\qquad f^1(x_{-n}):=f(x_{-n},1).
\]
Set $a^{\ell}=\mathbb{E}[f^{\ell}]$ for $\ell\in\{0,1\}$, $a_i:={\rm Inf}_i[f]$ for $i\in[n]$, and $a_i^{\ell}:={\rm Inf}_i[f^{\ell}]$ for $i\in[n-1]$ and $\ell\in\{0,1\}$. Define $g^0,g^1,b^{\ell},b_i,b_i^{\ell}$ in the same way, with $g$ in place of $f$.

We first record that the restrictions preserve the relevant structure.
\begin{claim}[Restriction preserves monotonicity and (super/sub)modularity]\label{lem:restriction-preserves}
Fix $b\in\{0,1\}$. If $f$ is increasing, then $f^b$ is increasing. If $f$ is submodular (resp. supermodular), then $f^b$ is submodular (resp. supermodular) on $\{0,1\}^{n-1}$.
\end{claim}
\begin{proof}
For $i\le n-1$, $\partial_i f^b$ is obtained by fixing the $n$th coordinate of $\partial_i f$ to $b$; hence $\partial_i f\ge0$ implies $\partial_i f^b\ge0$. For $i\ne j\le n-1$, $\partial_{ij} f^b$ is obtained by fixing the $n$th coordinate of $\partial_{ij} f$ to $b$. Thus the sign condition $\partial_{ij}f\le0$ (resp. $\ge0$) is inherited by $f^b$, and Fact~\ref{lem:equiv-submod} completes the proof.
\end{proof}
Thus, each restriction $f^b,g^b$ is increasing and inherits the same sub/supermodularity sign, so by the induction hypothesis in dimension $n-1$,
\[
\begin{aligned}
\Cov(f^1,g^1)&=\mathbb{E}[f^1g^1]-a^1b^1\ge \frac{1}{4} \sum_{i=1}^{n-1}a_i^1b_i^1,\\
\Cov(f^0,g^0)&=\mathbb{E}[f^0g^0]-a^0b^0\ge \frac{1}{4} \sum_{i=1}^{n-1}a_i^0b_i^0.
\end{aligned}
\]
Moreover, since $f$ is increasing and Boolean, $\partial_n f\in\{0,1\}$ pointwise and hence
\[
a^1-a^0=\mathbb{E}[f^1-f^0]=\mathbb{E}[\partial_n f]={\rm Inf}_n[f]=a_n,
\]
and similarly $b^1-b^0=b_n$. Decomposing the covariance along $x_n$ gives
\[
\begin{aligned}
\Cov(f,g)&=\frac{1}{2}(\mathbb{E}[f^1g^1]+\mathbb{E}[f^0g^0])-\frac{1}{4}(a^1+a^0)(b^1+b^0)\\
&=\frac{1}{2}\Cov(f^1,g^1)+\frac{1}{2}\Cov(f^0,g^0)+\frac{(a^1-a^0)(b^1-b^0)}{4}\\
&\ge \frac{1}{8}\sum_{i=1}^{n-1}\left(a_i^1b_i^1+a_i^0b_i^0\right)+\frac{a_nb_n}{4}.
\end{aligned}
\]
Thus, it remains to show
\[
\sum_{i=1}^{n-1}\left(a_i^1b_i^1+a_i^0b_i^0\right)
\ge 2\sum_{i=1}^{n-1}a_i b_i
=\frac{1}{2}\sum_{i=1}^{n-1}(a_i^1+a_i^0)(b_i^1+b_i^0),
\]
where we used $a_i=\frac{1}{2}(a_i^1+a_i^0)$ and $b_i=\frac{1}{2}(b_i^1+b_i^0)$. Equivalently, it suffices to prove
\[
\sum_{i=1}^{n-1}(a_i^1-a_i^0)(b_i^1-b_i^0)\ge0.
\]

\begin{claim}\label{claim:submodular-restriction}
If $f$ is submodular, then $\partial_i f^0\ge \partial_i f^1$ pointwise for every $i\in[n-1]$; if $f$ is supermodular, then the inequality is reversed. The same statements hold for $g$.
\end{claim}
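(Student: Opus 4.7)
The plan is to reduce the pointwise inequality $\partial_i f^0 \ge \partial_i f^1$ to the mixed second-difference characterization of submodularity recorded in Fact~\ref{lem:equiv-submod}(2). The first step will be a direct identification: since $\partial_n f(x_{-n}) = f(x_{-n},1)-f(x_{-n},0) = f^1(x_{-n})-f^0(x_{-n})$, commuting the two one-coordinate derivatives yields
\[
\partial_{in} f(x_{-i,-n}) \;=\; \partial_i\bigl(\partial_n f\bigr)(x_{-i,-n}) \;=\; \partial_i f^1(x_{-i,-n}) - \partial_i f^0(x_{-i,-n}),
\]
for every $i\in[n-1]$ and every $x_{-i,-n}\in\{0,1\}^{n-2}$. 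Thus the desired pointwise inequality $\partial_i f^0\ge\partial_i f^1$ is \emph{exactly} the pointwise inequality $\partial_{in} f\le 0$.

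The second step invokes the equivalence of submodularity with non-positive mixed second differences (Fact~\ref{lem:equiv-submod}(2)). Applied to the pair of coordinates $(i,n)$ with $i\in[n-1]$, submodularity of $f$ yields $\partial_{in} f \le 0$ pointwise, which by the identification above is precisely $\partial_i f^0 \ge \partial_i f^1$. The supermodular case is obtained verbatim by reversing all inequalities, using the reversed characterization in Fact~\ref{lem:equiv-submod}. Finally, replacing $f$ by $g$ throughout and using the same structural hypothesis on $g$ gives the corresponding conclusion for $g$.

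Since every step is a direct unfolding of definitions together with one application of Fact~\ref{lem:equiv-submod}, no genuine obstacle arises; the only point requiring a moment of care is the sign bookkeeping in the identity $\partial_{in} f = \partial_i f^1 - \partial_i f^0$ (as opposed to $\partial_i f^0 - \partial_i f^1$), so that the correct direction of the inequality is extracted from the submodular hypothesis. With this sign fixed, the claim follows, and it feeds directly into the preceding covariance decomposition: since both $a_i^1-a_i^0 = \mathbb{E}[\partial_i f^1-\partial_i f^0]$ and $b_i^1-b_i^0 = \mathbb{E}[\partial_i g^1-\partial_i g^0]$ carry the same sign (both $\le 0$ in the submodular case, both $\ge 0$ in the supermodular case), their product is nonnegative for each $i\in[n-1]$, completing the inductive step of Theorem~\ref{thm:main thm-1}.
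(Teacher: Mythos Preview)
Your proposal is correct and follows essentially the same route as the paper: you rewrite $\partial_{in}f=\partial_i f^1-\partial_i f^0$ and then invoke the second-difference characterization of submodularity (Fact~\ref{lem:equiv-submod}(2)) to get $\partial_{in}f\le 0$, exactly as the paper does by expanding $\partial_{in}f=f(11)-f(10)-f(01)+f(00)\le 0$ on the $(i,n)$-square. The sign bookkeeping you flag is handled correctly, and your closing remark about averaging to obtain $(a_i^1-a_i^0)(b_i^1-b_i^0)\ge 0$ matches the paper's use of the claim.
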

\begin{proof}
For any fixed values of the remaining coordinates, submodularity on the $(i,n)$-square gives
\[
\partial_{in}f=f(11)-f(10)-f(01)+f(00)\le0,
\]
equivalently $f(10)-f(00)\ge f(11)-f(01)$. This says exactly that the $i$-th derivative at $x_n=0$ dominates the $i$-th derivative at $x_n=1$ pointwise. The supermodular case is analogous.
\end{proof}
By \cref{claim:submodular-restriction} and averaging, $a_i^1-a_i^0\le 0$ and $b_i^1-b_i^0\le 0$ in the submodular case for every $i\in[n-1]$, while both are $\ge 0$ in the supermodular case. Hence $(a_i^1-a_i^0)(b_i^1-b_i^0)\ge0$ in either case, completing the induction.
\end{proof}

\begin{remark}
The same argument extends to bounded increasing functions $f,g:\{0,1\}^n\to[-1,1]$ upon replacing the $L^2$-influence by the $L^1$-influence.
\end{remark}

\subsection{A Heat-semigroup Representation and the Proof of~\cref{thm:main thm-2}}
We next give the analytic proof of the real-valued result. The key is a heat-semigroup identity that isolates the Level-$\ge2$ Fourier weight via second-order discrete derivatives.
\begin{lemma}[Heat-semigroup representation with $\partial_{ij}$]\label{lem:representation_of_high_level_correlation}
For any $f,g:\{0,1\}^n\to\mathbb{R}$, we have
\begin{equation}\label{eq:heat-partial}
\sum_{|S|\ge 2}\hat f(S)\hat g(S)
=\frac{1}{8}\sum_{1\le i<j\le n}\int_0^\infty \left(1-e^{-t}\right)e^{-t}\,\mathbb{E}\left[\partial_{ij} f\cdot P_t\partial_{ij} g\right]dt.
\end{equation}
\end{lemma}
\begin{proof}
For $S\subseteq[n]$ with $i,j\in S$ one checks
$\partial_{ij}\chi_S=4\,\chi_{S\setminus\{i,j\}}$, hence
\[
\mathbb{E}[\partial_{ij} f\cdot P_t\partial_{ij} g]
=16\sum_{S\ni i,j} e^{-t(|S|-2)}\hat f(S)\hat g(S).
\]
Summing over $i<j$ contributes the factor $\binom{|S|}{2}$; the Laplace kernel
$\big(1-e^{-t}\big)e^{-t}$ satisfies
\[
\int_0^\infty \big(1-e^{-t}\big)e^{-t}\,e^{-t(|S|-2)}\,dt
=\frac{1}{(|S|-1)|S|}.
\]
Therefore the right-hand side of \eqref{eq:heat-partial} equals
\[
\frac{1}{8}\cdot 16 \sum_{|S|\ge2}\binom{|S|}{2}\frac{1}{(|S|-1)|S|}\,\hat f(S)\hat g(S)
=\sum_{|S|\ge2}\hat f(S)\hat g(S).\tag*{\qedhere}
\]
\end{proof}

\begin{proof}[Proof of~\cref{thm:main thm-2}]
By~\cref{lem:equiv-submod}, in the supermodular case $\partial_{ij}f,\partial_{ij}g\ge0$ pointwise for all $i\ne j$; in the submodular case both are $\le0$. Since $P_t$ is order-preserving, in either case
\[
\mathbb{E}[\partial_{ij} f\cdot P_t\partial_{ij} g]\ge0
\]
for every $i<j$ and $t>0$. Integrating against the nonnegative kernel $(1-e^{-t})e^{-t}$ and summing over $i<j$ in~\cref{lem:representation_of_high_level_correlation} yields $\sum_{|S|\ge2}\hat f(S)\hat g(S)\ge 0$. Finally,
\[
\Cov(f,g)=\sum_{S\neq\emptyset}\hat f(S)\hat g(S)
=\sum_{i=1}^n \hat f(\{i\})\hat g(\{i\}) + \sum_{|S|\ge2}\hat f(S)\hat g(S)
\ge\sum_{i=1}^n \hat f(\{i\})\hat g(\{i\}),
\]
as claimed.
\end{proof}

\begin{proof}[Proof of Corollary~\ref{cor:L1-influence-main}]
For increasing $h$, we have $\partial_i h\ge0$ pointwise. Hence, by Fact~\ref{fact:fourier expansion of derivative}, $\Inf_i^{(1)}[h]=\mathbb{E}[\partial_i h]=-2\hat h(\{i\})$. Applying this identity to both $f$ and $g$ and using \cref{thm:main thm-2}, we obtain
\[
\Cov(f,g) \ge \sum_{i=1}^n \hat f(\{i\})\hat g(\{i\})=\frac14\sum_{i=1}^n
\Inf_i^{(1)}[f]\Inf_i^{(1)}[g].
\]
If, in addition, $f$ and $g$ are $\{0,1\}$-valued, then $\partial_i f,\partial_i g\in\{0,1\}$ pointwise, so $\Inf_i^{(1)}[\cdot]=\Inf_i[\cdot]$ for both functions. Thus \cref{thm:main thm-1} follows.
\end{proof}

\subsection{A Spectral Friedgut--Kahn--Kalai--Keller Bound}\label{subsec:spectral-FKKK}

We now record a first-order semigroup identity that is useful for the spectral conjecture \eqref{eq:spectral-conj}. The identity is the standard first-order semigroup representation for covariance used, in closely related form, by Keller--Mossel--Sen~\cite{KMS2014correlation}.

\begin{lemma}[First-order semigroup representation]\label{lem:first-order-semigroup-covariance}
For any $f,g:\{0,1\}^n\to\mathbb{R}$,
\begin{equation}\label{eq:first-order-semigroup-covariance}
\Cov(f,g)=\frac14\int_0^\infty e^{-t}\sum_{i=1}^n\tup{\partial_i f,P_t\partial_i g}\,dt.
\end{equation}
\end{lemma}

\begin{proof}
By Facts~\ref{fact:fourier expansion of derivative} and~\ref{fact:heat-fourier},
\[
\tup{\partial_i f,P_t\partial_i g}
=4\sum_{S\ni i} e^{-t(|S|-1)}\hat f(S)\hat g(S).
\]
Multiplying by $e^{-t}$, summing over $i$, and integrating in $t$ gives
\[
\frac14\int_0^\infty e^{-t}\sum_{i=1}^n\tup{\partial_i f,P_t\partial_i g}\,dt
=\sum_{S\neq\emptyset}\left(|S|\int_0^\infty e^{-t|S|}\,dt\right)\hat f(S)\hat g(S)
=\sum_{S\neq\emptyset}\hat f(S)\hat g(S),
\]
which is exactly $\Cov(f,g)$ by Parseval's identity.
\end{proof}

We view $\{0,1\}^n$ as the additive group $\mathbb{F}_2^n$. For two real-valued functions $f,g:\{0,1\}^n\to\mathbb{R}$, their \emph{convolution} is defined by
\[
(f*g)(x):=\mathbb{E}_{y\in\{0,1\}^n}\big[f(y)g(x\oplus y)\big],
\]
where $\oplus$ denotes coordinatewise addition modulo $2$.

\begin{fact}[Fourier transform of convolution; see {\cite[Definition~1.24 and Theorem~1.27]{Ryanbook}}]\label{fact:convolution-fourier}
For any $f,g:\{0,1\}^n\to\mathbb{R}$ and any $S\subseteq[n]$,
\[
\widehat{f*g}(S)=\hat f(S)\hat g(S).
\]
Consequently, $(f*g)(x)=\sum_{S\subseteq[n]}\hat f(S)\hat g(S)\chi_S(x)$.
\end{fact}

\begin{prop}[Spectral-defect representation]\label{prop:spectral-defect-semigroup}
Let $f,g:\{0,1\}^n\to\{0,1\}$ be increasing. Then
\begin{equation}\label{eq:spectral-defect-semigroup}
\Cov(f,g)-4\sum_{S\neq\emptyset}|S|\hat f(S)^2\hat g(S)^2
=
\frac14\int_0^\infty e^{-t}\sum_{i=1}^n
\left(
\tup{\partial_i f,P_t\partial_i g}
-4\|\partial_i(f*g)\|_2^2
\right)dt.
\end{equation}
\end{prop}

\begin{proof}
The covariance part is~\eqref{eq:first-order-semigroup-covariance}. For the quadratic part, by Facts~\ref{fact:fourier expansion of derivative} and~\ref{fact:convolution-fourier}
\[
\partial_i(f*g)
=-2\sum_{S\ni i}\hat f(S)\hat g(S)\chi_{S\setminus\{i\}}.
\]
Thus, by Parseval's identity,
\[
\sum_{i=1}^n\|\partial_i(f*g)\|_2^2
=4\sum_{i=1}^n\sum_{S\ni i}\hat f(S)^2\hat g(S)^2
=4\sum_{S\neq\emptyset}|S|\hat f(S)^2\hat g(S)^2.
\]
Since $\int_0^\infty e^{-t}\,dt=1$, subtracting this identity from~\eqref{eq:first-order-semigroup-covariance} gives \eqref{eq:spectral-defect-semigroup}.
\end{proof}

\begin{lemma}[Derivative of convolution]\label{lem:derivative-convolution-bound}
For every $i\in[n]$,
\[
\partial_i(f*g)=-\frac12\, (\partial_i f* \partial_i g).
\]
Consequently,
\[
4\|\partial_i(f*g)\|_2^2
=
\|\partial_i f*\partial_i g\|_2^2
\le
\|\partial_i f\|_2^2\,\|\partial_i g\|_2^2.
\]
Moreover, equality in the last inequality holds if and only if $\hat f(S)\hat g(T)=0$ for all distinct $S,T\ni i$. Equivalently, either $\partial_i f\equiv0$, or $\partial_i g\equiv0$, or there exists a unique $U\ni i$ such that $\hat f(S)=\hat g(S)=0$ for every $S\ni i$ with $S\neq U$.
\end{lemma}

\begin{proof}
By Fact~\ref{fact:fourier expansion of derivative}, the Fourier coefficient of $\partial_i f$ at $R\subseteq[n]\setminus\{i\}$ is $-2\hat f(R\cup\{i\})$, and similarly for $g$. Hence
\[
\partial_i f*\partial_i g
=4\sum_{S\ni i}\hat f(S)\hat g(S)\chi_{S\setminus\{i\}},
\]
whereas
\[
\partial_i(f*g)
=-2\sum_{S\ni i}\hat f(S)\hat g(S)\chi_{S\setminus\{i\}}.
\]
This proves $\partial_i(f*g)=-\frac12(\partial_i f*\partial_i g)$. Furthermore,
\begin{align*}
\|\partial_i f\|_2^2\|\partial_i g\|_2^2-
\|\partial_i f*\partial_i g\|_2^2
&=16\left(\sum_{S\ni i}\hat f(S)^2\right)
\left(\sum_{T\ni i}\hat g(T)^2\right)
-16\sum_{S\ni i}\hat f(S)^2\hat g(S)^2\\
&=16\sum_{\substack{S,T\ni i\\S\neq T}}\hat f(S)^2\hat g(T)^2\ge0.
\end{align*}
This proves the norm inequality. Equality holds if and only if every summand in the final expression is zero, equivalently $\hat f(S)\hat g(T)=0$ for all distinct $S,T\ni i$. This is the stated support condition.
\end{proof}

\begin{cor}\label{cor:lower-bound-via-noisy-derivatives}\label{cor:fourier-lower-bound}
Let $f,g:\{0,1\}^n\to\{0,1\}$ be increasing. Then
\begin{equation}\label{eq:lower-bound-via-noisy-derivatives}
\Cov(f,g)-4\sum_{S\neq\emptyset}|S|\hat f(S)^2\hat g(S)^2
\ge
\frac14\int_0^\infty e^{-t}\sum_{i=1}^n
\Cov(\partial_i f,P_t\partial_i g)\,dt.
\end{equation}
Consequently,
\begin{equation}\label{eq:fourier-lower-bound}
\Cov(f,g)-4\sum_{S\neq\emptyset}|S|\hat f(S)^2\hat g(S)^2
\ge
\sum_{|S|\ge2}\hat f(S)\hat g(S).
\end{equation}
\end{cor}

\begin{proof}
By Proposition~\ref{prop:spectral-defect-semigroup} and Lemma~\ref{lem:derivative-convolution-bound},
$$\Cov(f,g)-4\sum_{S\neq\emptyset}|S|\hat f(S)^2\hat g(S)^2\ge
\frac14\int_0^\infty e^{-t}\sum_{i=1}^n
\left(
\tup{\partial_i f,P_t\partial_i g}
-\|\partial_i f\|_2^2\|\partial_i g\|_2^2
\right)dt.$$
Since $f$ and $g$ are increasing and Boolean-valued, $\partial_i f,\partial_i g\in\{0,1\}$ pointwise. Hence $\|\partial_i f\|_2^2=\mathbb{E}[\partial_i f]$ and $\|\partial_i g\|_2^2=\mathbb{E}[\partial_i g]$. Moreover, $P_t$ preserves means, and therefore
\[
\tup{\partial_i f,P_t\partial_i g}
-\|\partial_i f\|_2^2\|\partial_i g\|_2^2
=
\mathbb{E}[\partial_i f\cdot P_t\partial_i g]
-\mathbb{E}[\partial_i f]\mathbb{E}[P_t\partial_i g]
=
\Cov(\partial_i f,P_t\partial_i g).
\]
This proves \eqref{eq:lower-bound-via-noisy-derivatives}.

It remains to compute the right-hand side of \eqref{eq:lower-bound-via-noisy-derivatives}. Since $P_t$ acts on the lower-dimensional cube by multiplying the level-$k$ Fourier coefficients by $e^{-tk}$,
\[
\Cov(\partial_i f,P_t\partial_i g)=4\sum_{\substack{S\ni i\\ |S|\ge2}}
e^{-t(|S|-1)}\hat f(S)\hat g(S).
\]
Substituting this into \eqref{eq:lower-bound-via-noisy-derivatives} gives
\[
\begin{aligned}
\frac14\int_0^\infty e^{-t}\sum_{i=1}^n
\Cov(\partial_i f,P_t\partial_i g)\,dt
&=
\sum_{|S|\ge2}
\left(\sum_{i\in S}\int_0^\infty e^{-t|S|}\,dt\right)
\hat f(S)\hat g(S)  \\
&=
\sum_{|S|\ge2}
\left(|S|\cdot\frac1{|S|}\right)
\hat f(S)\hat g(S)=\sum_{|S|\ge2}\hat f(S)\hat g(S).
\end{aligned}
\]
Together with \eqref{eq:lower-bound-via-noisy-derivatives}, this proves \eqref{eq:fourier-lower-bound}.
\end{proof}

\begin{remark}\label{rem:spectral-sign-condition}
Let $f,g:\{0,1\}^n\to\{0,1\}$ be increasing. If
\[
\Cov(\partial_i f,P_t\partial_i g)\ge0
\quad\text{for every } i\in[n]\text{ and every }t\ge0,
\]
then Corollary~\ref{cor:lower-bound-via-noisy-derivatives} gives
\[
\Cov(f,g)\ge4\sum_{S\neq\emptyset}|S|\hat f(S)^2\hat g(S)^2.
\]
Indeed, $P_t$ preserves monotonicity. Thus if $\partial_i f$ and $\partial_i g$ have the same monotonicity type, Harris--Kleitman's inequality applies either directly, or after multiplying both functions by $-1$.
\end{remark}

\begin{proof}[Proof of~Corollary~\ref{cor:supermodular-submodular}]
If $f$ and $g$ are supermodular, then for each $i$ the functions $\partial_i f$ and $\partial_i g$ are increasing in all remaining coordinates. If $f$ and $g$ are submodular, then these derivatives are decreasing in all remaining coordinates. Since $P_t$ preserves the same monotonicity type, Harris--Kleitman's inequality implies
\[
\Cov(\partial_i f,P_t\partial_i g)\ge0
\]
for every $i$ and $t\ge0$. The result follows from the preceding remark.
\end{proof}

\begin{cor}[Diagonal spectral inequality]\label{cor:diagonal-case}
For every increasing $f:\{0,1\}^n\to\{0,1\}$,
\begin{equation}\label{eq:diagonal-spectral}
\Var(f)\ge4\sum_{S\neq\emptyset}|S|\hat f(S)^4.
\end{equation}
\end{cor}

\begin{proof}
Apply Corollary~\ref{cor:lower-bound-via-noisy-derivatives} with $g=f$. For every $i\in[n]$ and $t\ge0$,
\[
\Cov(\partial_i f,P_t\partial_i f)=\sum_{\emptyset\neq R\subseteq[n]\setminus\{i\}}e^{-t|R|}\widehat{\partial_i f}(R)^2\ge0.
\]
Therefore $\Var(f)=\Cov(f,f)\ge4\sum_{S\neq\emptyset}|S|\hat f(S)^4$.
\end{proof}

\section{Upper Bounds for Correlation}\label{sec:upper}

\subsection{Talagrand-Type Upper Bound}
We now turn to an $L^1$-$L^2$ upper bound of Talagrand type for the Level-$\ge2$ contribution. The proof below follows the semigroup interpolation argument in Mossel's lecture notes~\cite[Page 147]{Mossel2020}. The new input here is that Lemma~\ref{lem:representation_of_high_level_correlation} replaces the first-order derivative representation by a second-order derivative representation.
\begin{theorem}\label{thm:Talagrand level-2 bound}
Let $f,g:\{0,1\}^n\to\mathbb{R}$. Then
\begin{equation}
\left|\sum_{|S|\ge2}\hat f(S)\hat g(S)\right|\le \frac{9}{8}\sum_{1\le i<j\le n}
\frac{\|\partial_{ij} f\|_{2}\|\partial_{ij} g\|_{2}}
{1+\log\left(\dfrac{\|\partial_{ij} f\|_{2}\|\partial_{ij} g\|_{2}}{\|\partial_{ij} f\|_{1}\|\partial_{ij} g\|_{1}}\right)}.
\end{equation}
A summand is interpreted as $0$ if $\partial_{ij}f\equiv0$ or $\partial_{ij}g\equiv0$.
\end{theorem}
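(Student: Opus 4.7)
The plan is to derive the bound from the heat-semigroup representation of Lemma~\ref{lem:heat-partial}, which expresses the Level-$\ge 2$ correlation as a sum of one-dimensional time integrals of $\mathbb{E}[\partial_{ij} f \cdot P_t \partial_{ij} g]$. The task reduces to bounding each inner product so that it interpolates between the Cauchy--Schwarz estimate $\|\partial_{ij} f\|_2 \|\partial_{ij} g\|_2$ (sharp at $t=0$) and the $L^1$ estimate $\|\partial_{ij} f\|_1 \|\partial_{ij} g\|_1$ (the natural object as $t \to \infty$), with control over how rapidly the transition occurs along the kernel $(1-e^{-t})e^{-t}$.

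To produce such an interpolation, I would use self-adjointness of $P_t$ to write $\mathbb{E}[F \cdot P_t G] = \mathbb{E}[P_{t/2} F \cdot P_{t/2} G]$ with $F := \partial_{ij} f$, $G := \partial_{ij} g$, apply Cauchy--Schwarz, and invoke Bonami--Beckner hypercontractivity (Theorem~\ref{thm:hyperc}) in the form $\|P_{t/2} F\|_2 \le \|F\|_{p(t)}$, where $p(t) := 1 + e^{-t}\in[1,2]$. Log-convexity of $L^p$-norms then gives $\|F\|_{p(t)} \le \|F\|_1^{1-\phi(t)} \|F\|_2^{\phi(t)}$ with $\phi(t) := 2 e^{-t}/(1 + e^{-t})\in(0,1]$. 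Writing $A_{ij} := \|\partial_{ij} f\|_1 \|\partial_{ij} g\|_1$, $B_{ij} := \|\partial_{ij} f\|_2 \|\partial_{ij} g\|_2$, and $\lambda_{ij} := \log(B_{ij}/A_{ij}) \ge 0$, this yields the unified estimate
\[
|\mathbb{E}[\partial_{ij} f \cdot P_t \partial_{ij} g]| \le A_{ij}\, e^{\lambda_{ij} \phi(t)},
\]
smoothly interpolating between $B_{ij}$ at $t=0$ and $A_{ij}$ at $t=\infty$.

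The remaining step is a one-variable analytic estimate. Substituting $u = \phi(t)$ (equivalently $u = 2/(e^t+1)$, which decreases from $1$ to $0$ as $t$ goes from $0$ to $\infty$) in the time integral of Lemma~\ref{lem:heat-partial} gives the Jacobian identity $(1-e^{-t})\,e^{-t}\, dt = \tfrac{4(1-u)}{(2-u)^3}\, du$ after reversing orientation. The per-pair contribution thus becomes $\tfrac{1}{8}\, A_{ij} \int_0^1 \tfrac{4(1-u)}{(2-u)^3}\, e^{\lambda_{ij} u}\, du$, and the claim reduces to the purely analytic inequality
\[
\int_0^1 \frac{4(1-u)}{(2-u)^3}\, e^{\lambda u}\, du \le \frac{9\, e^\lambda}{1+\lambda}\qquad(\lambda \ge 0).
\]
I expect this to be the main obstacle, since tracking the explicit constant $\tfrac{9}{8}$ requires case analysis. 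For $\lambda \le 1$, the crude bound $e^{\lambda u} \le e^\lambda$ combined with the evaluation $\int_0^1 \tfrac{4(1-u)}{(2-u)^3}\, du = \tfrac{1}{2}$ gives $\tfrac{e^\lambda}{2} \le \tfrac{9 e^\lambda}{1+\lambda}$. For $\lambda > 1$, the integration range should be split at $u_0 = 1 - 1/\lambda$: on $[0, u_0]$ use the pointwise maximum $\tfrac{4(1-u)}{(2-u)^3} \le \tfrac{16}{27}$ (attained at $u=1/2$); on $[u_0, 1]$ use $\tfrac{4(1-u)}{(2-u)^3} \le 4(1-u)$ and reduce to the standard Laplace integral $\int_0^{1/\lambda} v\, e^{-\lambda v}\, dv \le 1/\lambda^2$. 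Adding the two contributions and multiplying through by $(1+\lambda)/e^\lambda$ produces a function of $\lambda$ that can be checked to stay below $9$ for all $\lambda\ge 1$. Combining the resulting per-pair bound with the prefactor $\tfrac{1}{8}$ from Lemma~\ref{lem:heat-partial} and summing over $i<j$ yields the stated Talagrand $L^1$-$L^2$ upper bound.
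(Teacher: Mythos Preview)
Your proposal is correct and follows essentially the same route as the paper: the semigroup representation of Lemma~\ref{lem:heat-partial}, self-adjointness plus Cauchy--Schwarz, the hypercontractive bound $\|P_{t/2}h\|_2\le\|h\|_{1+e^{-t}}$, Littlewood interpolation between $L^1$ and $L^2$, and then a one-variable kernel estimate obtained by splitting the integration range. The only cosmetic difference is the change of variable in the kernel estimate: the paper substitutes $u=e^{-t}$ (so the exponent $\tanh(t/2)=(1-u)/(1+u)$) and splits at $u_0=1/(1+L)$, whereas you substitute $u=\phi(t)=1-\tanh(t/2)$ so that the exponent becomes linear in $u$, and split at $u_0=1-1/\lambda$; the paper even remarks that the $\tanh(t/2)$ substitution yields a slightly better constant, so your parametrization is in fact the one it points to.
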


\begin{proof}
We start from the semigroup representation in \cref{lem:representation_of_high_level_correlation}:
\begin{equation}\label{eq:semigroup representation}
\left|\sum_{|S|\ge2}\hat f(S)\hat g(S)\right|
\le\frac{1}{8}\sum_{1\le i<j\le n}\int_0^\infty (1-e^{-t})e^{-t}\left|\tup{\partial_{ij}f,P_t(\partial_{ij}g)}\right|dt.
\end{equation}
By self-adjointness of the semigroup,
$\tup{\partial_{ij}f,P_t(\partial_{ij}g)}=\tup{P_{t/2}(\partial_{ij}f),P_{t/2}(\partial_{ij}g)}$. Hence, by Cauchy--Schwarz,
\[
\left|\tup{\partial_{ij}f,P_t(\partial_{ij}g)}\right|
\le \|P_{t/2}(\partial_{ij}f)\|_{2}\,\|P_{t/2}(\partial_{ij}g)\|_{2}.
\]

\begin{fact}[Littlewood interpolation]
For $\theta\in(0,1)$, let $p_{\theta},p_1,p_2$ be such that $\frac{1}{p_{\theta}}=\frac{\theta}{p_1}+\frac{1-\theta}{p_2}$. Then
\[
\|h\|_{p_{\theta}}\le \|h\|_{p_1}^{\theta}\|h\|_{p_2}^{1-\theta}.
\]
\end{fact}

We now invoke the standard one-parameter hypercontractive inequality (\cref{thm:hyperc}) and Littlewood interpolation to $h\in\{\partial_{ij}f,\partial_{ij}g\}$ with $p_{\theta}=1+e^{-t}$, $p_1=1$, and $p_2=2$: for every $t\ge0$,
\[
\|P_{t/2} h\|_2 \le\|h\|_{1+e^{-t}}
\le \|h\|_1^{\alpha(t)}\|h\|_2^{1-\alpha(t)},
\qquad \alpha(t)=\frac{1-e^{-t}}{1+e^{-t}}=\tanh\left(\frac{t}{2}\right).
\]
Thus
\[
\begin{aligned}
\left|\tup{\partial_{ij}f,P_t \partial_{ij}g}\right|
&\le \|P_{t/2}\partial_{ij}f\|_2\|P_{t/2}\partial_{ij}g\|_2
\le \|\partial_{ij}f\|_{1+e^{-t}}\|\partial_{ij}g\|_{1+e^{-t}}\\
&\le \|\partial_{ij}f\|_2\|\partial_{ij}g\|_2
\left(\frac{\|\partial_{ij}f\|_2\|\partial_{ij}g\|_2}{\|\partial_{ij}f\|_1\|\partial_{ij}g\|_1}\right)^{-\alpha(t)}.
\end{aligned}
\]
For pairs with $\partial_{ij}f,\partial_{ij}g\not\equiv0$, define $R_{ij}:=\frac{\|\partial_{ij}f\|_2\|\partial_{ij}g\|_2}{\|\partial_{ij}f\|_1\|\partial_{ij}g\|_1}\ge1$. Plugging this into \eqref{eq:semigroup representation} reduces the proof to the one-dimensional kernel estimate
\begin{equation}\label{ineq:intergal}
\mathcal{I}(R):=\int_0^\infty (1-e^{-t})e^{-t} R^{-\tanh(t/2)}dt\le \frac{9}{1+\log R}\qquad(R\ge1).
\end{equation}
Indeed, this estimate immediately gives the desired inequality.

It remains to prove \eqref{ineq:intergal}. Set $u=e^{-t}\in(0,1]$. Since $\tanh(\frac{t}{2})=\frac{1-u}{1+u}$ and $dt=-\frac{du}{u}$,
\[
\mathcal{I}(R)=\int_0^1 (1-u)R^{-\frac{1-u}{1+u}}du.
\]
Let $L:=\log R\ge0$ and split the integral at $u_0:=\frac{1}{1+L}$.

\textbf{Case 1}. If $u\in[0,u_0]$, then every factor in the integrand is at most $1$, and therefore
\[
\mathcal{I}_1:=\int_0^{u_0} (1-u)R^{-\frac{1-u}{1+u}}du\le\int_0^{u_0}1\,du=\frac{1}{1+L}.
\]

\textbf{Case 2}. If $u\in[u_0,1]$, then $\frac{1-u}{1+u}\ge \frac{1-u}{2}$, so
\[
\mathcal{I}_2:=\int_{u_0}^{1} (1-u)R^{-\frac{1-u}{1+u}}du
\le \int_{u_0}^{1} (1-u)e^{-\frac{L}{2}(1-u)}du
\le \int_{0}^{\infty} y e^{-\frac{L}{2}y}dy=\frac{4}{L^2}.
\]
For $L\ge1$, combining these two bounds gives
\[
\mathcal{I}(R)\le \frac{1}{1+L}+\frac{4}{L^2}\le \frac{9}{1+L}.
\]
For $L\in[0,1]$, we have $\mathcal{I}(R)\le \int_0^1(1-u)du=\tfrac12\le \tfrac{9}{1+L}$. This proves \eqref{ineq:intergal}.
\end{proof}

\begin{proof}[Proof of~\cref{thm:main-thm4}]
For increasing $f$ and $g$, Fact~\ref{fact:fourier expansion of derivative} gives $\mathrm{Inf}^{(1)}_i[f]=-2\hat f(\{i\})$ and $\mathrm{Inf}^{(1)}_i[g]=-2\hat g(\{i\})$. Hence
\[
\frac14\sum_{i=1}^n\mathrm{Inf}^{(1)}_i[f]\mathrm{Inf}^{(1)}_i[g]
=\sum_{i=1}^n\hat f(\{i\})\hat g(\{i\}),
\]
and therefore
\[
\Cov(f,g)-\frac14\sum_{i=1}^n\mathrm{Inf}^{(1)}_i[f]\mathrm{Inf}^{(1)}_i[g]
=\sum_{|S|\ge2}\hat f(S)\hat g(S).
\]
Applying \cref{thm:Talagrand level-2 bound} proves \cref{thm:main-thm4}.
\end{proof}

\begin{remark}
With the change of variable $u=\tanh(\frac{t}{2})\in[0,1)$, the kernel estimate can be sharpened, giving the slightly better constant $C=\frac{1+\sqrt{8}}{16}$ in place of $\frac{9}{8}$.
\end{remark}

Inspired by the recent work of Przyby{\l}owski~\cite{przybylowski2024kkl}, we extend the Talagrand-type $L^1$-$L^2$ upper bound to higher Fourier levels. Let $d\ge2$ and $f,g:\{0,1\}^n\to\mathbb R$. For $T=\{i_1,\dots,i_d\}\subseteq[n]$ with $|T|=d$, write $\partial_T f:=\partial_{i_1}\circ \cdots \circ \partial_{i_d}f$. A direct Fourier calculation gives, for every $T$ and $S$,
\[
\partial_T\chi_S=
\begin{cases}
(-2)^d\chi_{S\setminus T},& T\subseteq S,\\
0,& T\not\subseteq S.
\end{cases}
\]
Hence
\[
\tup{\partial_T f,P_t(\partial_T g)}=4^d\sum_{S\supseteq T}e^{-t(|S|-d)}\hat{f}(S)\hat{g}(S).
\]
Summing over $|T|=d$ yields
\[
\sum_{|T|=d}\tup{\partial_T f,P_t(\partial_T g)}
=4^d\sum_{|S|\ge d}\binom{|S|}{d}e^{-t(|S|-d)}\hat{f}(S)\hat{g}(S).
\]
Now integrate against $\frac{d}{4^d}(1-e^{-t})^{d-1}e^{-t}$ and use the beta integral
\[
\int_0^\infty (1-e^{-t})^{d-1}e^{-t}e^{-t(|S|-d)}dt
=\int_0^1 (1-u)^{d-1}u^{|S|-d}du=\frac{(d-1)!(|S|-d)!}{|S|!}.
\]
For $m=|S|\ge d$, the resulting coefficient is
\[
\frac{d}{4^d}\cdot 4^d\binom{m}{d}\frac{(d-1)!(m-d)!}{m!}=1.
\]
Therefore
\begin{equation}\label{eq:level-d-semigroup-representation}
\sum_{|S|\ge d}\hat{f}(S)\hat{g}(S)=\frac{d}{4^d}\sum_{|T|=d}\int_0^\infty(1-e^{-t})^{d-1}e^{-t}
\tup{\partial_T f,P_t(\partial_T g)}dt.
\end{equation}
For $d=2$, this reduces to Lemma~\ref{lem:representation_of_high_level_correlation}. Combining \eqref{eq:level-d-semigroup-representation} with the same
semigroup--hypercontractive interpolation scheme as in~\cref{thm:Talagrand level-2 bound} yields the following higher-level estimate. The details, including the $d$-dependent one-dimensional kernel estimate, are given in Appendix~\ref{app:level-d-upper}.
\begin{cor}\label{cor:level-d-talagrand}
Let $d\ge2$ and let $f,g:\{0,1\}^n\to\mathbb{R}$. Then
\begin{equation}\label{eq:level-d-talagrand}
\left|\sum_{|S|\ge d}\hat f(S)\hat g(S)\right|\le C_d\sum_{T\subseteq[n]:|T|=d}
\frac{\|\partial_T f\|_{2}\|\partial_T g\|_{2}}
{1+\log\left(\dfrac{\|\partial_T f\|_{2}\|\partial_T g\|_{2}}{\|\partial_T f\|_{1}\|\partial_T g\|_{1}}\right)}, \quad C_d=\frac{1+(2^dd!)^{1/d}}{4^d}.
\end{equation}
A summand is interpreted as $0$ if $\partial_Tf\equiv0$ or $\partial_Tg\equiv0$.
\end{cor}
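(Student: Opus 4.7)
The plan is to mirror the proof of Theorem~\ref{thm:Talagrand level-2 bound} using the degree-$d$ heat-semigroup representation~\eqref{eq:heatsemigroup resp level d} in place of Lemma~\ref{lem:heat-partial}. Starting from that identity, self-adjointness of $P_t$ combined with Cauchy--Schwarz gives $|\tup{\partial_T f, P_t(\partial_T g)}| \le \|P_{t/2}\partial_T f\|_2 \|P_{t/2}\partial_T g\|_2$. Applying Bonami--Beckner (Theorem~\ref{thm:hyperc}) together with Littlewood interpolation to each factor $h\in\{\partial_T f,\partial_T g\}$ with endpoints $(1,2)$ and intermediate exponent $1+e^{-t}$ yields $\|P_{t/2} h\|_2 \le \|h\|_1^{\alpha(t)}\|h\|_2^{1-\alpha(t)}$ with $\alpha(t)=\tanh(t/2)$, exactly as in the $d=2$ case. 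Writing $R_T := \|\partial_T f\|_2\|\partial_T g\|_2/(\|\partial_T f\|_1\|\partial_T g\|_1) \ge 1$, the problem reduces to the one-dimensional kernel estimate
$$\mathcal{I}_d(R) := \int_0^\infty (1-e^{-t})^{d-1} e^{-t}\, R^{-\tanh(t/2)}\,dt \le \frac{1+2(d!)^{1/d}}{d(1+\log R)} \qquad (R \ge 1),$$
since multiplying by the prefactor $d/4^d$ from~\eqref{eq:heatsemigroup resp level d} and using the identity $2(d!)^{1/d}=(2^d d!)^{1/d}$ recovers exactly $C_d=(1+(2^d d!)^{1/d})/4^d$.

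To prove the kernel bound, I would substitute $u=e^{-t}$ and then $y=1-u$ to rewrite $\mathcal{I}_d(R)=\int_0^1 y^{d-1} R^{-y/(2-y)}\,dy$, using $\tanh(t/2)=(1-u)/(1+u)=y/(2-y)$. Two complementary bounds are then in play: the trivial estimate $R^{-y/(2-y)}\le 1$ gives $\mathcal{I}_d(R)\le 1/d$, while $y/(2-y)\ge y/2$ on $[0,1]$ together with the Gamma integral gives $\mathcal{I}_d(R) \le \int_0^\infty y^{d-1} e^{-Ly/2}\,dy = 2^d(d-1)!/L^d$, where $L:=\log R$. These two bounds cross precisely at $L_0:=2(d!)^{1/d}$. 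For $L\le L_0$ the first bound delivers $(1+L)\mathcal{I}_d(R)\le (1+L_0)/d$; for $L\ge L_0$ the second yields $(1+L)\mathcal{I}_d(R) \le (1+L_0)\cdot 2^d(d-1)!/L_0^d=(1+L_0)/d$, where I use that $L\mapsto(1+L)/L^d$ is decreasing for $L\ge L_0>1$ (valid as soon as $d\ge 2$). Combining the two regimes produces the required kernel bound.

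The main obstacle, compared to the $d=2$ case in the body of the paper, will be choosing the split point $L_0$ to extract the stated constant: the factor $(2^d d!)^{1/d}$ appears precisely from balancing the trivial estimate $1/d$ against the Gamma-integral estimate $2^d(d-1)!/L^d$, and nothing so clean drops out of a naive split at $u_0=1/(1+L)$ as used in the proof of Theorem~\ref{thm:Talagrand level-2 bound}. As a sanity check, for $d=2$ this tight balance recovers $C_2=(1+\sqrt{8})/16$, matching the improved constant mentioned in the remark following Theorem~\ref{thm:Talagrand level-2 bound}. Once the optimal balance is identified, the remainder is a direct repetition of the $d=2$ argument, applied termwise over the $d$-subsets $T\subseteq[n]$ on the right-hand side of~\eqref{eq:heatsemigroup resp level d}.
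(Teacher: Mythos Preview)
Your proposal is correct and follows essentially the same route as the paper, which simply says to repeat the proof of Theorem~\ref{thm:Talagrand level-2 bound} with the degree-$d$ representation~\eqref{eq:heatsemigroup resp level d}. You in fact supply more detail than the paper does: your balancing of the trivial bound $1/d$ against the Gamma-integral bound $2^d(d-1)!/L^d$ at $L_0=2(d!)^{1/d}$ is precisely what produces the stated constant $C_d=(1+(2^d d!)^{1/d})/4^d$, and for $d=2$ recovers the improved constant $(1+\sqrt{8})/16$ mentioned in the remark.
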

For $d=2$, this gives $C_2=\frac{1+\sqrt8}{16}$, matching the improved constant noted above.

\subsection{Mossel's Two-Function Poincar\'e Inequality and an Iterated Refinement}

We start with a two-function inequality controlling covariance by coordinatewise influences. We include the short proof for completeness.
\begin{lemma}[Mossel's two-function Poincar\'e inequality~\cite{Mossel2020}]\label{lem:two-function-poincare}
For any real-valued functions $f,g:\{0,1\}^n\to\mathbb{R}$,
\[
|\Cov(f,g)|\le \frac{1}{4}\sum_{i=1}^n\sqrt{{\rm Inf}_i[f]{\rm Inf}_i[g]}.
\]
\end{lemma}
\begin{proof}
By Plancherel and Cauchy--Schwarz,
\[
\begin{aligned}
|\Cov(f,g)|
&=\left|\sum_{S\neq\emptyset}\hat{f}(S)\hat{g}(S)\right|
\le\sum_{|S|\ge1}|S|\,\bigl|\hat{f}(S)\bigr|\,\bigl|\hat{g}(S)\bigr|\\
&=\sum_{i=1}^n\sum_{S:i\in S}\bigl|\hat{f}(S)\bigr|\,\bigl|\hat{g}(S)\bigr|
\le \sum_{i=1}^n\sqrt{\sum_{S:i\in S}\bigl|\hat{f}(S)\bigr|^2}\sqrt{\sum_{S:i\in S}\bigl|\hat{g}(S)\bigr|^2}\\
&=\frac{1}{4}\sum_{i=1}^n\sqrt{4\sum_{S:i\in S}\hat{f}(S)^2}\sqrt{4\sum_{S:i\in S}\hat{g}(S)^2}
=\frac{1}{4}\sum_{i=1}^n\sqrt{{\rm Inf}_i[f]{\rm Inf}_i[g]}.
\end{aligned}
\]
Here we used the identity ${\rm Inf}_i[h]=4\sum_{S:i\in S}\hat h(S)^2$.
\end{proof}

We next prove the refinement stated in the introduction.
\begin{proof}[Proof of~\cref{thm:stronger two poincare}]
Covariance is invariant under adding constants. Moreover, the right-hand side of
\eqref{eq:iterated-level-one-refinement} only involves non-empty Fourier coefficients and influences, and hence is also invariant under adding constants. Thus we may assume that $\mathbb{E}[f]=\mathbb{E}[g]=0$.

Fix $J\subseteq[n]$. Write $x=(x_J,y)$, where
$y\in\{0,1\}^{[n]\setminus J}$. For each $A\subseteq J$, define functions
$f_A,g_A:\{0,1\}^{[n]\setminus J}\to\mathbb{R}$ by
\[
f_A(y):=\sum_{B\subseteq[n]\setminus J}\hat f(A\cup B)\chi_B(y),
\quad
g_A(y):=\sum_{B\subseteq[n]\setminus J}\hat g(A\cup B)\chi_B(y).
\]
Then we have $f(x)=\sum_{A\subseteq J}\chi_A(x_J)f_A(y)$ and $g(x)=\sum_{A\subseteq J}\chi_A(x_J)g_A(y)$.
By orthogonality in the coordinates of $J$, and since $\mathbb{E}[f]=\mathbb{E}[g]=0$,
\[
\Cov(f,g)=\tup{f,g}=\sum_{A\subseteq J}\tup{f_A,g_A},
\]
where the inner products on the right are taken over the cube
$\{0,1\}^{[n]\setminus J}$. For each $A\subseteq J$, we have $\mathbb{E}[f_A]=\hat f(A)$ and $\mathbb{E}[g_A]=\hat g(A)$. In particular, $\mathbb{E}[f_\emptyset]=\mathbb{E}[g_\emptyset]=0$. Hence
\[
\tup{f_A,g_A}=\Cov(f_A,g_A)+\hat f(A)\hat g(A),
\]
and therefore
\[
|\Cov(f,g)|\le \sum_{A\subseteq J}|\tup{f_A,g_A}|\le
\sum_{\emptyset\ne A\subseteq J}|\hat f(A)\hat g(A)|+\sum_{A\subseteq J}|\Cov(f_A,g_A)|.
\]

We now apply Mossel's two-function Poincar\'e inequality (Lemma~\ref{lem:two-function-poincare}) to each pair $(f_A,g_A)$ on the cube $\{0,1\}^{[n]\setminus J}$. This gives
\[
|\Cov(f_A,g_A)|\le\frac14\sum_{j\in[n]\setminus J} \sqrt{{\rm Inf}_j[f_A]{\rm Inf}_j[g_A]}.
\]
Consequently,
\[
\sum_{A\subseteq J}|\Cov(f_A,g_A)|\le\frac14\sum_{j\in[n]\setminus J}\sum_{A\subseteq J}\sqrt{{\rm Inf}_j[f_A]{\rm Inf}_j[g_A]}\le\frac14\sum_{j\in[n]\setminus J}\sqrt{\sum_{A\subseteq J}{\rm Inf}_j[f_A]}\sqrt{\sum_{A\subseteq J}{\rm Inf}_j[g_A]},
\]
by Cauchy--Schwarz. For every $j\in[n]\setminus J$, a direct Fourier calculation gives
\[
\sum_{A\subseteq J}{\rm Inf}_j[f_A]=4\sum_{A\subseteq J}
\sum_{\substack{B\subseteq[n]\setminus J\\ j\in B}}\hat f(A\cup B)^2=
4\sum_{S:j\in S}\hat f(S)^2={\rm Inf}_j[f],
\]
and similarly $\sum_{A\subseteq J}{\rm Inf}_j[g_A]={\rm Inf}_j[g]$. Substituting this into the previous estimate yields
\[
|\Cov(f,g)|\le\sum_{\emptyset\ne A\subseteq J}|\hat f(A)\hat g(A)|+\frac14\sum_{j\in[n]\setminus J}\sqrt{{\rm Inf}_j[f]{\rm Inf}_j[g]},
\]
which proves the theorem.
\end{proof}

\section{Concluding Remarks}\label{sec:reverse}
Reverse hypercontractivity suggests a natural route to strengthening correlation lower bounds. We begin by recalling Borell's form on the hypercube~\cite{Borel1982,MORSS2006}. 
\begin{theorem}\label{thm:reverse hypercontra}
Let $f,g:\{0,1\}^n\to[0,\infty)$. Then for any $p,q\in(0,1)$ such that $e^{-2t}\le (1-p)(1-q)$,
\[
\tup{f,P_tg}\ge \|f\|_p\|g\|_q.
\]
\end{theorem}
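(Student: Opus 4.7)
The plan is to follow Borell's classical two-step blueprint: establish the sharp inequality on a single coordinate (the ``two-point'' case) and then tensorize using the product structure of $P_t$. Throughout set $\rho:=e^{-t}\in(0,1]$, so the hypothesis reads $\rho^2\le(1-p)(1-q)$.

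\emph{Step 1: The two-point base case ($n=1$).} I parameterize $f(0)=a,\ f(1)=b,\ g(0)=c,\ g(1)=d\ge 0$ and compute directly
\begin{equation*}
\tup{f,P_tg}\ =\ \frac{\rho}{2}(ac+bd)+\frac{1-\rho}{4}(a+b)(c+d),
\end{equation*}
while $\|f\|_p=((a^p+b^p)/2)^{1/p}$ and $\|g\|_q=((c^q+d^q)/2)^{1/q}$. By the independent $(a,b)$- and $(c,d)$-homogeneities I normalize $\|f\|_p=\|g\|_q=1$, reducing the claim to a two-parameter inequality on a compact region. One then verifies it either by direct calculus optimization (the extremizers sit at the boundary) or, more conceptually, by a second-order expansion around the constant configuration $a=b,\,c=d$: the Hessian of $\log(\tup{f,P_tg}/(\|f\|_p\|g\|_q))$ at that point is positive semidefinite precisely when $\rho^2\le(1-p)(1-q)$. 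This local bound can be upgraded to the global inequality using the concavity structure of the $L^p$-norms for $p,q<1$, and the threshold $\rho^2=(1-p)(1-q)$ is exactly the boundary case in which the inequality becomes tight at a nonconstant extremizer.

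\emph{Step 2: Tensorization.} I induct on $n$. Because $P_t$ on $\{0,1\}^n$ is a tensor product of one-coordinate operators, its kernel factors as $K_t((x',x_n),(y',y_n))=K_t'(x',y')K_t''(x_n,y_n)$. Conditioning on the last coordinate pair $(x_n,y_n)\in\{0,1\}^2$ and applying the induction hypothesis to the slices $f(\cdot,x_n),\,g(\cdot,y_n)$ with the same parameters $(p,q,t)$ gives
\begin{equation*}
\tup{f,P_tg}\ \ge\ \mathbb{E}_{x_n,y_n}\!\left[K_t''(x_n,y_n)\,\|f(\cdot,x_n)\|_p\,\|g(\cdot,y_n)\|_q\right]\ =\ \tup{F,P_t''G},
\end{equation*}
where $F(x_n):=\|f(\cdot,x_n)\|_p$ and $G(y_n):=\|g(\cdot,y_n)\|_q$ are nonnegative functions on $\{0,1\}$. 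The two-point base case applied to $(F,G)$ yields $\tup{F,P_t''G}\ge\|F\|_p\|G\|_q$, and Fubini identifies $\|F\|_p=\|f\|_p$ and $\|G\|_q=\|g\|_q$, closing the induction.

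\emph{Main obstacle.} The analytical crux is the two-point inequality in Step~1; the tensorization is essentially bookkeeping once the base case is in hand. A cleaner route, which I expect to go through uniformly, is to derive the one-coordinate inequality from a \emph{reverse} logarithmic Sobolev inequality on the two-point space, then integrate along the semigroup via a Gronwall-type argument. The threshold $\rho^2\le(1-p)(1-q)$ then emerges as the natural differential condition, mirroring Gross's derivation of the Bonami--Beckner forward bound and bypassing the case analysis in the direct calculus proof.
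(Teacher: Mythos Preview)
The paper does not prove this theorem at all: it is stated in Section~\ref{sec:reverse} as a known result and attributed to Borell~\cite{Borel1982} and Mossel--O'Donnell--Regev--Steif--Sudakov~\cite{MORSS2006}, then used as a black box to derive \eqref{ineq:strong lower bound}. So there is no ``paper's own proof'' to compare against; your outline is in fact the classical route taken in those cited references (two-point inequality plus tensorization along the product structure of $P_t$).

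Your tensorization in Step~2 is correct and is exactly the standard argument. Your Step~1, however, is only a sketch: you correctly compute the bilinear form on one coordinate and correctly identify that the threshold $\rho^2\le(1-p)(1-q)$ appears as the Hessian condition at the constant configuration, but the sentence ``This local bound can be upgraded to the global inequality using the concavity structure of the $L^p$-norms for $p,q<1$'' is doing all the work and is not justified. A local second-order inequality at $a=b,\ c=d$ does not by itself give the global two-point inequality, and the relevant functional is not jointly concave in any obvious sense. The actual proofs in \cite{Borel1982,MORSS2006} either carry out the full variational analysis on the two-point space or, as you suggest at the end, go via a reverse log-Sobolev inequality and semigroup differentiation (the Gross-type argument). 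If you intend to complete the proof, you should carry out one of these in full; the rest of your plan is sound.
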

We now explain how this yields a quantitative correlation bound under the second-difference sign conditions. Assume that $f,g:\{0,1\}^n\to\mathbb{R}$ satisfy $\partial_{ij}f,\partial_{ij}g\ge0$ pointwise. Fix any $\theta\in(0,1)$ and set $p=q=1-\theta$. If $t\ge t_0:=\log(1/\theta)$, then $e^{-2t}\le e^{-2t_0}=\theta^2=(1-p)(1-q)$, so \cref{thm:reverse hypercontra} gives
\[
\tup{\partial_{ij}f, P_t\partial_{ij}g}\ge\|\partial_{ij}f\|_{1-\theta}\|\partial_{ij}g\|_{1-\theta}, \quad t\ge t_0.
\]
In the analogous case $\partial_{ij}f,\partial_{ij}g\le0$ pointwise, apply \cref{thm:reverse hypercontra} to $-\partial_{ij}f$ and $-\partial_{ij}g$. Plugging the resulting bound into the heat-semigroup representation \cref{lem:representation_of_high_level_correlation} and integrating over $[t_0,\infty)$ yields
\[
\sum_{|S|\ge 2}\hat f(S)\hat g(S)
\ge\frac{1}{8}\sum_{1\le i<j\le n}\int_{t_0}^\infty \left(1-e^{-t}\right)e^{-t}\|\partial_{ij}f\|_{1-\theta}\|\partial_{ij}g\|_{1-\theta}dt.
\]
Since
\[
\int_{t_0}^\infty \left(1-e^{-t}\right)e^{-t}dt
=\int_0^{e^{-t_0}}(1-u)du
=\theta-\frac{1}{2}\theta^2,
\]
we obtain the dimension-free lower bound
\begin{equation}\label{ineq:strong lower bound}
 \sum_{|S|\ge 2}\hat f(S)\hat g(S)
\ge c(\theta)\sum_{1\le i<j\le n}\|\partial_{ij}f\|_{1-\theta}\|\partial_{ij}g\|_{1-\theta},
\end{equation}
where $c(\theta):=\frac{\theta-\theta^2/2}{8}\in (0,1)$ for $\theta\in(0,1)$. Combining \eqref{ineq:strong lower bound} with
\[
\Cov(f,g)=\sum_{i=1}^n\hat{f}(\{i\})\hat{g}(\{i\})+\sum_{|S|\ge2}\hat{f}(S)\hat{g}(S),
\]
we obtain the strengthened lower bound
\begin{equation}
\Cov(f,g)\ge \sum_{i=1}^n\hat{f}(\{i\})\hat{g}(\{i\})+c(\theta)\sum_{1\le i<j\le n}\|\partial_{ij}f\|_{1-\theta}\|\partial_{ij}g\|_{1-\theta}.
\end{equation}
Moreover, since every term on the right-hand side of \eqref{ineq:strong lower bound} is nonnegative, we have the following exact criterion for vanishing Level-$\ge2$ weight.
\begin{cor}\label{cor:structure information of zero level2 weight}
Assume that $f,g:\{0,1\}^n\to\mathbb{R}$ satisfy $\partial_{ij}f,\partial_{ij} g\ge0$ pointwise for all $i\ne j$ (or $\partial_{ij}f,\partial_{ij} g\le0$ pointwise for all $i\ne j$). Then
\[
\sum_{|S|\ge2}\hat{f}(S)\hat{g}(S)=0
\Longleftrightarrow
\|\partial_{ij}f\|_{1-\theta}\cdot\|\partial_{ij}g\|_{1-\theta}=0
\ \text{for every }1\le i<j\le n,
\]
equivalently, for each pair $(i,j)$ at least one of $\partial_{ij}f$ or $\partial_{ij}g$ is identically zero.
\end{cor}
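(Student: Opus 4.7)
The plan is to read the statement as a direct consequence of the strengthened lower bound \eqref{ineq:strong lower bound} (for the forward direction) together with the Fourier/semigroup representation of $\sum_{|S|\ge2}\hat f(S)\hat g(S)$ (for the reverse direction). Throughout I fix an arbitrary parameter $\theta\in(0,1)$ once and for all.

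For the $(\Rightarrow)$ direction, I would argue as follows. By hypothesis we are in the regime where \eqref{ineq:strong lower bound} applies, hence
\[
0=\sum_{|S|\ge2}\hat f(S)\hat g(S)\ \ge\ c(\theta)\sum_{1\le i<j\le n}\|\partial_{ij}f\|_{1-\theta}\,\|\partial_{ij}g\|_{1-\theta}.
\]
Because $c(\theta)>0$ and every summand on the right is nonnegative, each $(i,j)$-term must vanish. Since the underlying measure $\mu$ assigns positive mass to every point of $\{0,1\}^n$, the quasi-norm $\|h\|_{1-\theta}=(\mathbb{E}|h|^{1-\theta})^{1/(1-\theta)}$ is zero if and only if $h\equiv 0$ pointwise. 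Therefore, for every pair $i<j$ at least one of $\partial_{ij}f$ or $\partial_{ij}g$ vanishes identically.

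For the $(\Leftarrow)$ direction, I would invoke the heat-semigroup representation of Lemma~\ref{lem:representation_of_high_level_correlation}:
\[
\sum_{|S|\ge2}\hat f(S)\hat g(S)=\frac{1}{8}\sum_{1\le i<j\le n}\int_0^\infty (1-e^{-t})e^{-t}\,\mathbb{E}\!\left[\partial_{ij}f\cdot P_t\partial_{ij}g\right]dt.
\]
If for some pair $(i,j)$ we have $\partial_{ij}f\equiv0$, then the integrand is identically zero; if instead $\partial_{ij}g\equiv0$, then $P_t\partial_{ij}g\equiv0$ and again the integrand vanishes. Under the assumed hypothesis this happens for every pair $(i,j)$, so every term in the outer sum is zero, yielding $\sum_{|S|\ge2}\hat f(S)\hat g(S)=0$. (Alternatively, one could argue purely by Fourier: $\partial_{ij}h\equiv0$ forces $\hat h(S)=0$ for every $S\supseteq\{i,j\}$, and for any $S$ with $|S|\ge2$ choosing any $i,j\in S$ produces the required cancellation in the spectral sum.)

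There is no real obstacle here — the content of the corollary lies entirely in the quantitative inequality \eqref{ineq:strong lower bound}, which has already been established via Borell's reverse hypercontractivity and the heat-semigroup identity. The only minor point worth flagging is the use of $\|\cdot\|_{1-\theta}$ with $1-\theta\in(0,1)$: this is not a norm but a quasi-norm, and one needs to note explicitly that it still vanishes exactly on the zero function because $\mu$ has full support on the finite space $\{0,1\}^n$. With that remark in place, both implications follow in one line each.
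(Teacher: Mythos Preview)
Your proposal is correct and matches the paper's approach. The paper derives the corollary in one line from the nonnegativity of every term on the right-hand side of \eqref{ineq:strong lower bound}; you have simply spelled out both implications (including the reverse one, which the paper leaves implicit) and added the useful observation that $\|\cdot\|_{1-\theta}$ vanishes only on the zero function because $\mu$ has full support.
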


Without sign information on the second differences, the representation in \cref{lem:representation_of_high_level_correlation} no longer has a nonnegative integrand, and reverse hypercontractivity cannot be applied directly. Equivalently, the flipped-difference operator $D_i f(x):=f(x)-f(x\oplus e_i)$ satisfies $D_{ij}f(x)=(-1)^{x_i+x_j}\partial_{ij}f(x)$ for $i\ne j$; the multiplicative character $(-1)^{x_i+x_j}$ introduces an oscillating sign. This is the same obstruction that appears in semigroup approaches to Talagrand-type correlation inequalities without additional sign structure; compare the proof strategy in~\cite[Proof of Theorem~3.1]{KMS2014correlation}.
\section*{Acknowledgements}

The authors would like to thank the anonymous referees for a careful reading of the manuscript and for many valuable comments and suggestions, which significantly improved the quality of the paper.

This work was initiated during the 3$^{\text{rd}}$ ECOPRO Student Research Program at the Institute for Basic Science (IBS) in the summer of 2025. We are grateful to Prof.~Hong Liu for providing this research opportunity. Fan Chang thanks his advisor, Prof.~Lei Yu, for valuable discussions at the early stage of this project. He also thanks Prof.~Haonan Zhang for introducing him to topics related to Markov semigroups and for many helpful early-stage discussions during his visit to the Institute for Advanced Study in Mathematics at Harbin Institute of Technology in the summer of 2024.

\bibliographystyle{abbrv}
\bibliography{reference}

\appendix

\section{The higher-level $L^1$-$L^2$ estimate}\label{app:level-d-upper}

\begin{proof}[Proof of \cref{cor:level-d-talagrand}]
Starting from~\eqref{eq:level-d-semigroup-representation}, we get
\[
\left|\sum_{|S|\ge d}\hat f(S)\hat g(S)\right|\le\frac{d}{4^d}\sum_{|T|=d}\int_0^\infty(1-e^{-t})^{d-1}e^{-t}\left|\tup{\partial_T f,P_t(\partial_T g)}\right|dt.
\]
Fix $T$ with $\partial_T f\not\equiv0$ and $\partial_T g\not\equiv0$.  By self-adjointness of $P_t$, Cauchy--Schwarz, hypercontractivity, and Littlewood interpolation,
\[
\begin{aligned}
\left|\tup{\partial_T f,P_t\partial_T g}\right|&=\left|\tup{P_{t/2}\partial_T f,P_{t/2}\partial_T g}\right|\le\|P_{t/2}\partial_T f\|_2\|P_{t/2}\partial_T g\|_2 \\
&\le\|\partial_T f\|_{1+e^{-t}}\|\partial_T g\|_{1+e^{-t}}\le\|\partial_T f\|_2\|\partial_T g\|_2 \left(\frac{\|\partial_T f\|_2\|\partial_T g\|_2}{\|\partial_T f\|_1\|\partial_T g\|_1}\right)^{-\tanh(t/2)},
\end{aligned}
\]
Write $R_T:=\frac{\|\partial_T f\|_2\|\partial_T g\|_2}{\|\partial_T f\|_1\|\partial_T g\|_1}\ge 1$. Thus it remains to bound, for $R\ge1$,
\[
\mathcal{I}_d(R):=\int_0^\infty (1-e^{-t})^{d-1}e^{-t}R^{-\tanh(t/2)}dt.
\]
Put $u=e^{-t}$ and $L=\log R$. Then
\[
\mathcal{I}_d(R)=\int_0^1(1-u)^{d-1}R^{-\frac{1-u}{1+u}} du.
\]
First,
\begin{equation}\label{ineq:first case}
\mathcal{I}_d(R)\le\int_0^1(1-u)^{d-1}\,du=\frac1d.
\end{equation}
Second, with $y=1-u$, and using $\frac{y}{2-y}\ge \frac{y}{2}$ for $0\le y\le1$, we get, for $L>0$,
\[
\mathcal{I}_d(R)=\int_0^1y^{d-1}\exp\left(-L\frac{y}{2-y}\right)\,dy\le
\int_0^\infty y^{d-1}e^{-\frac{Ly}{2}}\,dy=\frac{2^d\cdot d!}{dL^d}.
\]
Combining the two estimates gives
\[
\mathcal{I}_d(R)\le\frac{1+(2^d d!)^{1/d}}{d(1+L)}.
\]
Indeed, if $L\le (2^d d!)^{1/d}$, this follows from~\eqref{ineq:first case}. If $L>(2^d d!)^{1/d}$, write $L=a\cdot (2^d d!)^{1/d}$ with $a>1$. Then $1+L=1+a\cdot (2^d d!)^{1/d}\le (1+(2^d d!)^{1/d})a^d$ and hence
\[
\frac{2^d d!}{L^d}=\frac1{a^d}\le\frac{1+(2^d d!)^{1/d}}{1+L}.
\]
This proves the claimed kernel estimate. Substituting this estimate with $R=R_T$ yields~\eqref{eq:level-d-talagrand} with $C_d=\frac{1+(2^d d!)^{1/d}}{4^d}$.
\end{proof}

\end{document}